\documentclass[12pt]{amsart}
\hoffset -1.45cm \voffset -1.8cm \textwidth=6.2in \textheight=8.3in
\tolerance=9000 \emergencystretch=5pt \vfuzz=2pt
\parskip=1.2mm

\usepackage{amsmath,amsthm,amssymb, enumerate}
            \newcommand{\marginalnote}[1]{}
            \addtolength{\textheight}{1.3cm}

\usepackage{hyperref}
\usepackage{marginnote}
\usepackage[all]{xy}
\usepackage{graphicx}
\usepackage{mathrsfs}
\usepackage{epsfig}
\usepackage{color}	
\usepackage{ulem}	
\usepackage{caption} 
\usepackage{mathtools}
\captionsetup{belowskip=12pt,aboveskip=4pt}
\theoremstyle{plain}

\newtheorem{Thm}{Theorem}[section]

\newtheorem{Lem}[Thm]{Lemma}
\newtheorem{Cor}[Thm]{Corollary}

\newtheorem*{Thm*}{Theorem}
\theoremstyle{remark}

\theoremstyle{definition}

\newtheorem{Ex}[Thm]{Example}

\newtheorem{Que}[Thm]{Question}

\DeclarePairedDelimiter\floor{\lfloor}{\rfloor}

\usepackage{epigraph}

\setlength\epigraphwidth{8cm}
\setlength\epigraphrule{0pt}

\usepackage{etoolbox}

\makeatletter
\patchcmd{\epigraph}{\@epitext{#1}}{\itshape\@epitext{#1}}{}{}
\makeatother

\begin{document}
\title{Free subgroups of free products and combinatorial hypermaps}
\author{Laura Ciobanu \& Alexander Kolpakov}
\date{\today}

\begin{abstract}
We derive a generating series for the number of free subgroups of finite index in $\Delta^+ = \mathbb{Z}_p*\mathbb{Z}_q$ by using a connection between free subgroups of $\Delta^+$ and certain hypermaps (also known as ribbon graphs or ``fat'' graphs), and show that this generating series is transcendental. We provide non-linear recurrence relations for the above numbers based on differential equations that are part of the Riccati hierarchy.

We also study the generating series for conjugacy classes of free subgroups of finite index in $\Delta^+$, which correspond to isomorphism classes of hypermaps. Asymptotic formulas are provided for the numbers of free subgroups of given finite index, conjugacy classes of such subgroups, or, equivalently, various types of hypermaps and their isomorphism classes. 

\medskip

\noindent 2010 Mathematics Subject Classification: 14N10, 20E07, 20H10, 05E45, 33C20.

\medskip

\noindent Key words: subgroup growth, growth series, free product, free group, hypermap, ribbon graph.
\end{abstract}

\maketitle

\section{Introduction}

In this paper we investigate the connection between the number of free subgroups of finite index in the free product of finite cyclic groups $\Delta^+ = \mathbb{Z}_p* \mathbb{Z}_q$, where $p, q \geq 2$ (not necessarily prime or co-prime) and the number of certain hypermaps \footnote{also known as ribbon graphs or ``fat'' graphs}, to reveal new qualitative and quantitative information about these numbers and their generating functions. We also investigate the link between conjugacy classes of free subgroups of finite index in $\Delta^+$ and isomorphism classes of hypermaps. Our present contribution is to provide new formulas, recurrence relations and asymptotics for said numbers, accompanied by software which supplies a wide spectrum of examples, and to establish the transcendence and non-holonomy of some of the associated generating functions.

The notation $\Delta^+$ is motivated by the fact that for $pq \geq 6$ the group in question is isomorphic to the $(p,q,\infty)$ orientation-preserving Fuchsian triangle group acting on two-dimensional hyperbolic space, a group whose relationship with hypermaps has been fruitfully investigated in many papers, of which we mention the ground-laying work by Jones and Singerman \cite{Jones-Singerman}, and a recent series of results by Breda d'Azevedo -- Mednykh -- Nedela \cite{Breda-Mednykh-Nedela}, and Mednykh -- Nedela \cite{MN, MN1, MN2} who solved Tutte's classification problem for maps and hypermaps, as well as the papers \cite{GIR, Imrich, Petitot-Vidal, Stothers-modular, Vidal, Yao}. 

Our methods also provide a solution to Tutte's classification problem as described in \cite{Breda-Mednykh-Nedela}, without considering the genus-specific case, c.f. \cite{MN1, MN2}. 

General subgroup growth is the subject of the monograph \cite{LS} by Lubotzky and Segal, and further information on counting the number of subgroups in free products of cyclic groups of prime orders can be found in the papers by  M\"{u}ller and Schlage-Puchta \cite{MSP1, MSP2, MSP3}. There they employ the general theory of subgroup structure in free products of (finite and infinite) cyclic groups together with representation theory, analytic number theory and probability theory, among other tools.

We use \textit{species theory} (initiated by Joyal \cite{Joyal}, c.f. also the monographs \cite{Bergeron-et-al, Flajolet-Sedgewick}) as our main computational tool, in order to generalise and enhance the results of \cite{Petitot-Vidal, Stothers-modular, Stothers}. This technique allows us to write down the generating series for the numbers of free subgroups of finite index in Theorem \ref{thm:subgroup-growth-transcendental} (or rooted hypermaps in Theorem \ref{thm:p-q-hypermaps-growth-transcendental}) and the number of their conjugacy classes in Theorem \ref{thm:conjugacy-classes} (or isomorphism classes of hypermaps in Theorem \ref{thm:p-q-hypermap-isom}) in a relatively simple form suitable for routine calculation and computer experiments. Species theory is virtually unknown in the group-theory community, and the authors hope that the applications to subgroup growth in this paper will make its use more widespread.

Another goal of the paper is to display a hierarchy of differential equations that can be used to obtain certain non-linear recurrence relations for the numbers of finite index free subgroups in $\Delta^+$. This hierarchy is known as the Riccati hierarchy c.f. \cite{GdL}, and its applications here appear as a simplified version of the general phenomenon described primarily in \cite{Goulden-Jackson, Okounkov}. Some of our generating series are associated with the classical Riccati equation, which, in particular, implies that they are non-holonomic (see Corollary \ref{cor:non-holonomic}). More connections between the classical Riccati equation, map enumeration and continued fractions representation of the respective generating series appear in \cite{AB}. 

Throughout the paper we give concrete formulas for several particular cases of free products of cyclic groups, as well as for the related hypermaps as combinatorial objects, and a SAGE code is provided in the Appendix to support our findings and to provide illustrative examples where necessary. 

\begin{small}
\section*{Acknowledgements}

The authors gratefully acknowledge the support received from the University of Neuch\^atel Overhead grant no. 12.8/U.01851. L.C. was also supported by Professorship FN PP00P2-144681/1, and A.K. by Professorship FN PP00P2-170560. A.K. is thankful to the American Institute of Mathematics (AIM) for their support during the years 2015-2017, and to the participants of the AIM SQuaRE project ``Hyperbolic Geometry beyond Dimension Three'' for fruitful discussions of this work. Both authors thank the anonymous referees for their careful reading of the manuscript and numerous helpful remarks. 
\end{small}

\section{Preliminaries}
\subsection{Hypermaps and $(p,q)$-hypermaps }\label{s:hypermaps}

Let $D = [n]$ be a set of $n$ elements, called \textit{darts}, and let $\mathfrak{S}_n$ be the corresponding symmetric group acting on them.\footnote{here and below we assume that permutations are multiplied from left to right} Let $\alpha, \sigma \in \mathfrak{S}_n$ be two permutations such that the group $\langle \alpha, \sigma \rangle$ acts transitively on $D$. The triple $H = \langle D; \alpha, \sigma \rangle$ is called an \textit{oriented labelled (combinatorial) hypermap} with set of darts $D$. The convention in the literature is to call the orbits of $\sigma$ vertices ($\sigma$ stands for the French \textit{sommets}), the orbits of $\alpha$ hyper-edges ($\alpha$ for \textit{ar\^{e}tes}) and the orbits of $\varphi = \sigma^{-1}\, \alpha^{-1}$ hyper-faces ($\varphi$ for \textit{faces}), even though these are not all technically vertices, edges and faces. The hypermap $H$ can be equivalently represented as $H = \langle D; \alpha, \varphi \rangle$. If $\alpha$ is an involution then $H$ is called a \textit{map} (see \cite{NeBook} for an introduction to maps and hypermaps). 

One might find it helpful to construct a graph where we associate with each distinct cycle of $\varphi$ a white vertex and with each distinct cycle of $\alpha$ a black vertex; a white vertex is connected to a black vertex by an edge if their defining cycles have non-empty intersection. Then the $n$ edges in this bipartite graph can be thought of as darts, and the graph will turn out to be the dual graph to the topological hypermap defined below.

Indeed, a hypermap naturally appears in the setting of an orientable genus $g$ surface $\Sigma_g$ and a graph $\Gamma$ embedded in $\Sigma_g$ as $\iota: \Gamma \rightarrow \Sigma$, satisfying
\begin{itemize}
\item[1)] the complement $\Sigma_g \setminus \iota(\Gamma)$ is a union of topological discs called faces,
\item[2)] the faces are properly two-colourable (e.g. into black and white), i.e. faces of the same colour intersect only at vertices of $\Gamma$, and
\item[3)] the corners of the white faces are labelled with the numbers $1, 2, 3, \dots$ in some fashion (we may think that this information is carried by the embedding map $\iota$), and a black face corner label is equal to the adjacent white face corner label, when moving clockwise around their common vertex.
\end{itemize}
Then the triple $H = \langle \Sigma_g; \Gamma, \iota \rangle$ is an oriented labelled \textit{topological} hypermap. 

By removing a sub-disc in the interior of each face of a hypermap we obtain a \textit{ribbon graph} or \textit{``fat'' graph}, which is a graph together with a cyclic ordering on the set of darts incident to each vertex; the edges of the ribbon graph can be seen as small rectangles or ribbons attached in a given cyclic order to discs glued at the vertices. 

The correspondence between the topological and combinatorial definitions follows: 
\begin{itemize}
\item[1)] each disjoint cycle of $\alpha$ is obtained from recording the corner labels of a black face in an anticlockwise direction,
\item[2)] each disjoint cycle of $\sigma$ is obtained from recording the labels around a vertex in an anticlockwise direction,
\item[3)] each disjoint cycle of $\varphi$ is obtained from recording the corner labels of a white face in an anticlockwise direction. 
\end{itemize}

Consequently, the set of face labels becomes the set of darts of $H$, the white faces become hyper-faces of $H$ and the black faces become hyper-edges of $H$. Thus the combinatorial and topological descriptions of $H$ agree. Indeed, each topological hypermap produces a unique combinatorial hypermap, and given the above combinatorial information one may assemble an oriented connected surface from a number of topological discs, which are represented by polygons with labelled corners.

A topological hypermap $H$ is \textit{rooted} if we mark the first edge encountered while moving clockwise around the white corner of $H$ labelled $1$, and a combinatorial hypermap is rooted if one of its darts is marked as a root. We always assume that the root dart is $1$. 

\begin{Ex}
A partial picture of a rooted hypermap is shown in Figure~\ref{fig:hypermap}. The root corner is marked with an arrow and numbered $1$. 
\end{Ex}

\begin{figure}[h]
\centering
\includegraphics[scale=1.5]{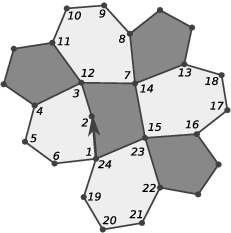}
\caption{A partial drawing of a rooted hypermap (its root is labelled $1$)}\label{fig:hypermap}
\end{figure}

 If we ignore the labelling (or marked root) of a hypermap, then we consider its entire isomorphism class. Two (combinatorial) hypermaps $H_1 = \langle D_1; \alpha_1, \sigma_1 \rangle$ and $H_2 = \langle D_2; \alpha_2, \sigma_2 \rangle$, assumed to be neither labelled nor rooted, are \textit{isomorphic} if there exists $\psi \in \mathfrak{S}_n$ such that $\psi\, \alpha_1 \psi^{-1}= \alpha_2\, $ and $\psi\, \sigma_1 \psi^{-1}= \sigma_2\, $. Their topological counterparts are isomorphic if there exists an orientation-preserving homeomorphism between the respective surfaces which preserves the graph embedding.
Two (combinatorial) rooted hypermaps are isomorphic if their roots correspond to each other under some hypermap isomorphism. An analogous definition holds in the topological case. Below we shall use the combinatorial and topological descriptions of hypermaps interchangeably.

A $(p,q)$-hypermap $H$ on $n$ darts is one in which $\alpha$ and $\varphi$ are permutations in $\mathfrak{S}_n$ such that $\alpha$ has $n/p$ cycles of length $p$, and $\varphi$ has $n/q$ cycles of length $q$. In other words, all hyper-edges of $H$ are $p$-gons and all hyper-faces of $H$ are $q$-gons.  Given this definition, it is more convenient to represent $H$ as $H = \langle D; \alpha, \varphi \rangle$.

\begin{figure}[h]
\centering
\includegraphics[scale=0.45]{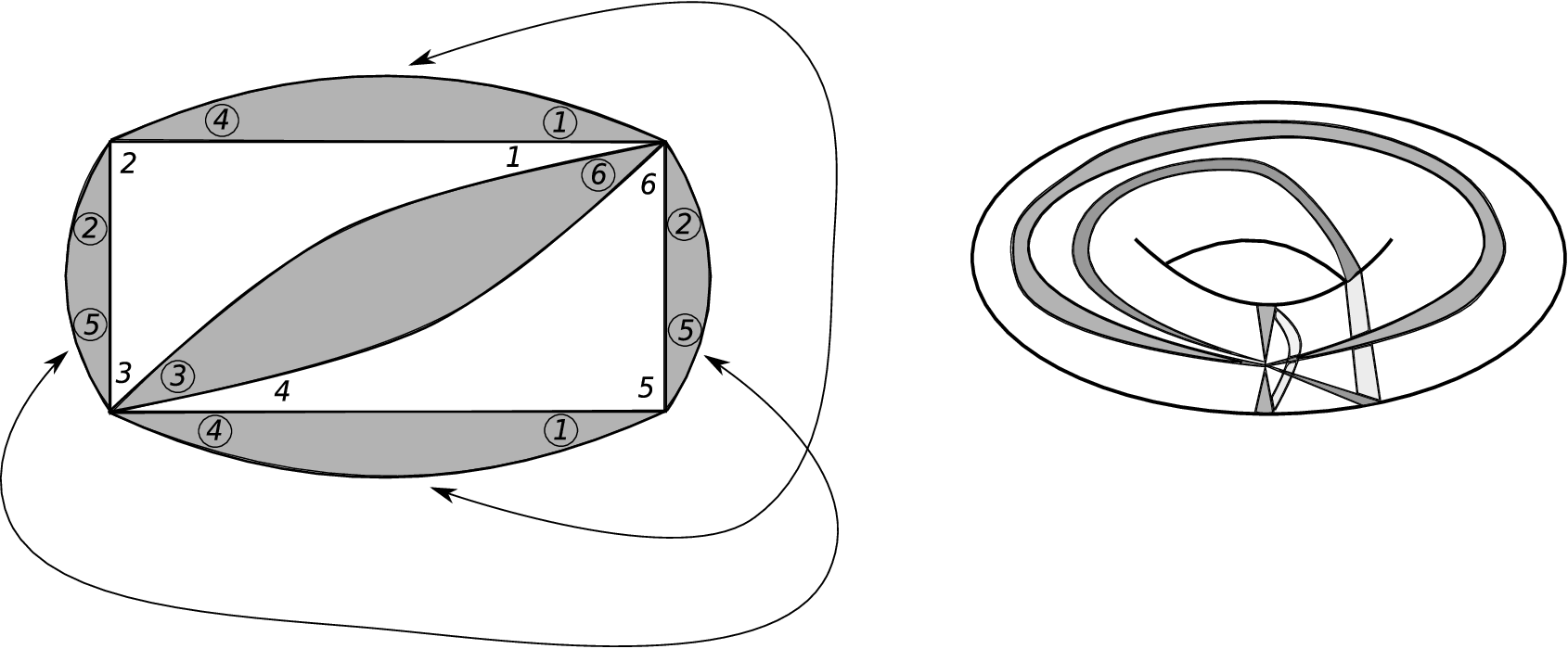}
\caption{A $(2,3)$-hypermap on a torus: the parallel sides of the rectangular shape on the left are identified in accordance with the circled labels and arrows in order to produce the torus with a hypermap on it depicted on the right.}\label{fig:triangulation}
\end{figure}

\begin{Ex}
The partial picture in Figure~\ref{fig:hypermap} features a $(5,6)$-hypermap.  By analogy with fullerene polyhedra, we shall call $(5,6)$-hypermaps \textit{fullerene hypermaps}, for short. However, in this case fullerene hypermaps do not have any specific restrictions (apart from those coming from the number of darts) on their topological genus, and thus on the number of pentagonal faces. 
\end{Ex}

\begin{Ex}
A triangulated surface carries a $(2,3)$-hypermap all of whose bigonal hyper-edges are collapsed into ordinary edges. We shall refer to a $(2,3)$-hypermap as a \textit{triangulation} (of an oriented surface), thus allowing identification of two sides of the same triangle. Figure~\ref{fig:triangulation} shows a triangulation of a torus with $\alpha = (1,4) (2,5) (3,6)$, $\sigma = (1,6,2,4,3,5)$, $\varphi = (1,2,3) (4,5,6)$.

Another important class of hypermaps is the class of $(2,4)$-hypermaps, or quadrangulations. In general, every $(2,q)$-hypermap is equivalent to a \textit{map} as described in \cite[\S 2 - \S 3]{Jones-Singerman}, and (once labelled) its corner labels of hyper-edges become exactly the dart labels of the resulting map after all bigonal hyper-edges are collapsed into usual edges.
\end{Ex}

\begin{figure}[h]
\centering
\includegraphics[scale=0.45]{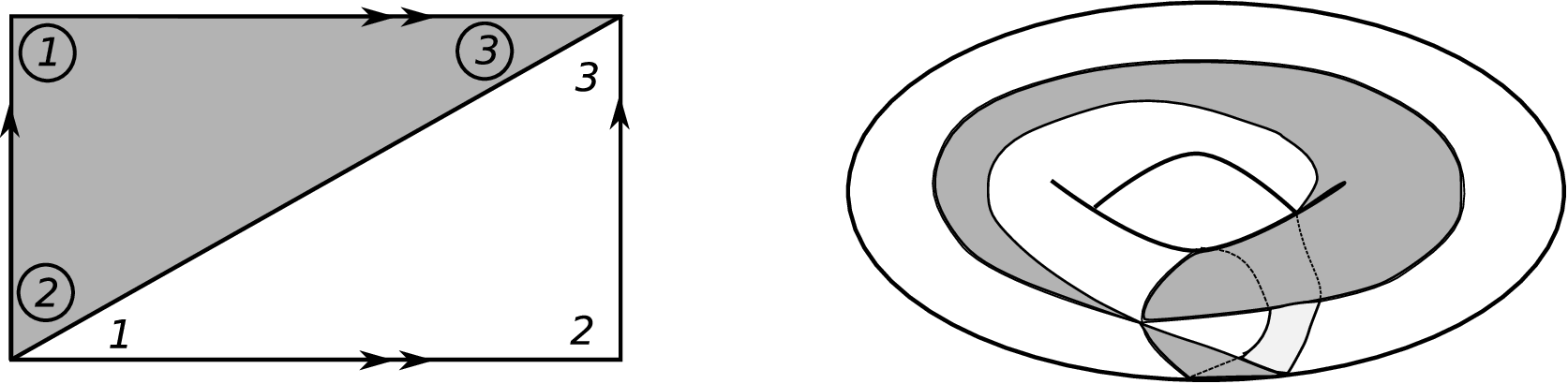}
\caption{A $(3,3)$-hypermap on a torus: the parallel sides of the rectangle on the left are identified according to the arrows marking them in order to produce the torus with a hypermap on it depicted on the right.}\label{fig:bicoloured-triangulation}
\end{figure}

\begin{Ex}
The $(3,3)$-hypemaps are triangulations that admit a colouring which is chequerboard around the vertices. We shall call $(3,3)$-hypermaps \textit{bi-coloured triangulations} (whose dual map is a bipartite cubic graph). Figure~\ref{fig:bicoloured-triangulation} features a bicoloured triangulation of a torus with $\alpha = \sigma = \varphi = (1,2,3)$. 
\end{Ex}

\subsection{Formal series}

A \textit{hypergeometric sequence} $(c_k)_{k\geq 0}$ is one for which $c_0=1$ and the ratio of consecutive terms is a rational function in $k$, i.e. there exist monic polynomials $P(k)$ and $Q(k)$ such that $$\frac{c_{k+1}}{c_k}=\frac{P(k)}{Q(k)}.$$
If $P$ and $Q$ can be factored as $$\frac{P(k)}{Q(k)}=\frac{(k+a_1)(k+a_2)\dots(k+a_p)}{(k+b_1)(k+b_2)\dots(k+b_q)(k+1)},$$
with $a_i$, $1 \leq i \leq p$, $b_j$, $1 \leq j \leq q$, non-negative real numbers, then we use the notation 
$$ {}_{p}F_{q}\left[ \begin{array}{c} a_1 \dots a_p\\ b_1 \dots b_q 
\end{array}; z \right]$$ for the formal series $F(z)=\sum_{k\geq 0} c_kz^k$, c.f. \cite[\S 3.2]{Zeilberger-et-al}. The factor $(k+1)$ belongs to the denominator for historical reasons. Such a hypergeometric series satisfies the differential equation
\begin{equation}\label{HGSdiffeq}
\Big(\vartheta(\vartheta+b_{1}-1)\cdots(\vartheta+b_{q}-1)-z(\vartheta+a_{1})%
\cdots(\vartheta+a_{p})\Big)\,\, {}_{p}F_{q}(z) = 0,
\end{equation}
where $\vartheta=z\frac{d}{dz}$, c.f. \cite[\S 16.8(ii)]{DLMF}. Among numerous differential equations related to (\ref{HGSdiffeq}) is the \textit{classical Riccati equation}, which will play an important role in this paper. It is a first order non-linear equation with variable coefficients $f_i(x)$, of the form 
\begin{equation}\label{Riccati2}
\frac{\mathrm{d}y}{\mathrm{d}x}= f_1(x) + f_2(x) y + f_3(x) y^2.
\end{equation}

The \textit{Pochhammer symbol} is related to hypergeometric series, is defined as $$(a)_n=a(a+1)\dots (a+n-1),$$ and has asymptotic expansion
\begin{equation}\label{Pochhammer}
(a)_n \approx \frac{\sqrt{2\pi}}{\Gamma(a)}\, e^{-n}\, n^{a + n - \frac{1}{2}},
\end{equation}
where $\Gamma(a)$ is the Gamma function of $a$, defined as $\Gamma(a)=(a-1)!$ if $a$ is a positive integer, and $\Gamma(a)= \int_0^{\infty} x^{a-1}e^{-x} dx$ for all the non-integer real positive numbers.

A formal power series $y=f(x)$ is said to be \textit{D-finite}, or \textit{differentiably finite}, or \textit{holonomic}, if there exist polynomials $p_0, \dots, p_m$ (not all zero) such that $p_m(x)y^{(m)}+ \dots +p_0(x)y=0$, where $y^{(m)}$ denotes the $m$-th derivative of $y$ with respect to $x$. All algebraic power series are holonomic, but not vice versa, c.f. \cite[Appendix B.4]{Flajolet-Sedgewick}.

Finally recall that the \textit{Hadamard product} $(A \odot B)(z)$ of two formal single-variable series $A(z)=\sum_{n\geq 0} a_n \frac{z^n}{n!}$ and $B(z)=\sum_{n\geq 0} b_n \frac{z^n}{n!}$ is given by $$(A \odot B)(z):=\sum_{n\geq 0} a_n b_n \frac{z^n}{n!}.$$

Let $\lambda = 1^{n_1} 2^{n_2} \dots m^{n_m}$ be a partition of a natural number $n\geq 0$, i.e. $n = \sum_{i\geq 1}\, i n_i$. We write $\lambda \vdash n$ and define $\lambda ! := 1^{n_1} n_1! 2^{n_2} n_2! \dots m^{n_m} n_m!$. Let $\mathbf{z}^{\lambda}: = z_1^{n_1} z_2^{n_2} \dots z_m^{n_m}$ for some collection of variables $z_1$, $z_2$, $\dots$. Then for two multi-variable series $A(\mathbf{z}) = \sum_{n\geq 0} \sum_{\lambda \vdash n} a_\lambda \frac{\mathbf{z}^\lambda}{\lambda !}$ and $B(\mathbf{z}) = \sum_{n\geq 0} \sum_{\lambda \vdash n} b_\lambda \frac{\mathbf{z}^\lambda}{\lambda !}$ we have $$(A\odot B)(\mathbf{z}) := \sum_{n\geq 0} \sum_{\lambda \vdash n} a_\lambda b_\lambda \frac{\mathbf{z}^\lambda}{\lambda!}.$$

\subsection{Species theory}

Species theory (th\'{e}orie des esp\`{e}ces) is due to A. Joyal \cite{Joyal} and is a powerful way to describe and count labelled discrete structures. Since it requires a lengthy and formal setup, we give here only the basic ideas and refer the reader to \cite{Bergeron-et-al, Flajolet-Sedgewick} for further details.

A \textit{species of structures} is a rule (or functor) $F$ which produces

\begin{itemize}
\item[i)] for each finite set $U$ (of labels), a finite set $F[U]$ of structures on $U$,
\item[ii)] for each bijection $\sigma: U\rightarrow V$, a function $F[\sigma]: F[U] \rightarrow F[V]$.
\end{itemize} 
 
The functions $F[\sigma]$ should further satisfy the following functorial properties:

\begin{itemize}
\item[i)] for all bijections $\sigma:U \rightarrow V$ and $\tau:V \rightarrow W$, 
$F[\tau \circ \sigma] = F[\tau]\circ F[\sigma]$,
\item[ii)] for the identity map $Id_U : U \rightarrow U$, $F[Id_U] = Id_{F[U]}$.
\end{itemize}

Let $[n] = \{1,2,\dots,n\}$ be an $n$-element set, and assume that $[0] = \emptyset$. A species $F$ of \textit{labelled structures} has exponential generating function $F(z) = \sum_{n \geq 0} |F[n]| \frac{z^n}{n!}$. For a species of \textit{unlabelled structures} (i.e. structures up to isomorphism) we write $\widetilde{F}$, and its generating function is a specialisation of the cycle index series, in the sense that $\widetilde{F}(z)=\mathcal{Z}_F(z,z^2, z^3 \dots)$, where the \textit{cycle index series} (see \cite[\S 1.2.3]{Bergeron-et-al}) is defined as: $$ \mathcal{Z}_F(z_1,z_2, \dots) = \sum_{n\geq 0} \frac{1}{n!} \sum_{\sigma \in \mathfrak{S}_n} | Fix(F[\sigma])| \mathbf{z}^\sigma.$$ Here $Fix(F[\sigma])$ is the set of elements of $F[n]$ having $F[\sigma]$ as automorphism, and $\mathbf{z}^\sigma = z_1^{c_1} z_2^{c_2} \dots z_m^{c_m}$ if the cycle type of $\sigma$ is $c(\sigma) = (c_1, c_2, \dots, c_m)$ (i.e. $c_k$ is the number of cycles of length $k$ in the decomposition of $\sigma$ into disjoint cycles). 

Species can often be described by functional equations, as in the following example. 

\begin{Ex}
Let $\mathcal{A}$ denote the species of rooted trees (i.e. trees with a distinguished vertex, or \textit{arborescences} \cite{Joyal}), and $E$ the species of sets (from the French \textit{ensembles} \cite{Joyal}). Let $Z$ be the singleton species with generating function $Z(z) = z$. Then the functional equation $\mathcal{A} = Z E(\mathcal{A})$ expresses the fact that any rooted tree with vertex labels from a finite set $U$ can be naturally described as a root (a vertex $z \in U$) to which is attached a set of disjoint rooted trees (on $U\setminus \{z\}$) which translate into equalities for generating functions; in this case we have $\mathcal{A}(z) = z \exp(\mathcal{A}(z))$, where $\mathcal{A}(z)$ is the generating function for finite rooted labelled trees.

By using the Lagrange -- Br\"{u}nner inversion formula we get $\mathcal{A}(z) = \sum_{n\geq 2} \frac{n^{n-2}}{(n-1)!} z^n$. This leads to Cayley's formula of $n^{n-2}$ for the number of labelled trees on $n$ vertices via the fact that the number of rooted trees on $n$ vertices is the $n$-th coefficient of $\mathcal{A}(z)$ and each tree with $n$ vertices rooted at $1$ corresponds to $(n-1)!$ labelled trees.
\end{Ex}

\section{Subgroups of free products of cyclic groups}\label{s:groups}

Let $\mathfrak{H}^r_{p,q}(n)$ be the set of connected oriented rooted $(p,q)$-hypermaps on $n$ darts. In this section we prove two lemmas about the correspondence between finite index subgroups of $\Delta^+$ and hypermaps. This correspondence may also be seen via arguments involving the Schreier graph of the respective subgroup (c.f. \cite{Stothers}), however additional steps are required in order to translate the Schreier graph into the corresponding hypermap.

\begin{Lem}\label{lemma:hypermaps-and-subgroups-1}
Let $p, q > 1$ be two natural numbers such that $pq \geq 6$. There is a one-to-one correspondence between $\mathfrak{H}^r_{p,q}(n)$ and the set of free subgroups of index $n$ in the group $\Delta^+ = \mathbb{Z}_p * \mathbb{Z}_q$.
\end{Lem}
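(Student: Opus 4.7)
\medskip

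\noindent\textbf{Proof proposal.} The plan is to implement the standard coset-action/Schreier-action dictionary, but reformulated so that it delivers a hypermap directly rather than via the Schreier graph. The two ingredients are: (i) the algebraic consequence that, inside $\Delta^+$, freeness of a subgroup is equivalent to being torsion-free, which by Kurosh's subgroup theorem for free products amounts to the coset action containing no short cycles; and (ii) the universal property of the free product $\Delta^+=\mathbb{Z}_p*\mathbb{Z}_q$, which lets us promote any pair $(\alpha,\varphi)\in\mathfrak{S}_n\times\mathfrak{S}_n$ satisfying $\alpha^p=\varphi^q=1$ to a homomorphism $\Delta^+\to\mathfrak{S}_n$.

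First I would go from subgroups to hypermaps. Let $a,b$ denote the canonical generators of $\mathbb{Z}_p$ and $\mathbb{Z}_q$ inside $\Delta^+$. Given a subgroup $H\leq\Delta^+$ of index $n$, label the right cosets $\{Hg_1,\dots,Hg_n\}$ by $[n]$, assigning $1$ to the coset $H$. The right multiplication action produces permutations $\alpha,\varphi\in\mathfrak{S}_n$ associated to $a$ and $b$, which generate a transitive subgroup and satisfy $\alpha^p=\varphi^q=\mathrm{id}$. By Kurosh's theorem every subgroup of $\Delta^+$ splits as a free product of a free group with conjugates of subgroups of $\mathbb{Z}_p$ and $\mathbb{Z}_q$; hence $H$ is free iff it is torsion-free, iff no $\Delta^+$-conjugate of a nontrivial power of $a$ or $b$ lies in $H$, iff each cycle of $\alpha$ has length exactly $p$ and each cycle of $\varphi$ has length exactly $q$. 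With the root chosen to be the dart $1$, this produces an element of $\mathfrak{H}^r_{p,q}(n)$.

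Going the other way, given $H=\langle[n];\alpha,\varphi\rangle\in\mathfrak{H}^r_{p,q}(n)$, the relations $\alpha^p=1$, $\varphi^q=1$ and the universal property of the free product supply a homomorphism $\rho:\Delta^+\to\mathfrak{S}_n$ with $\rho(a)=\alpha$, $\rho(b)=\varphi$. Transitivity of $\langle\alpha,\varphi\rangle$ makes the stabilizer $H_\rho$ of the root dart $1$ a subgroup of index exactly $n$; the cycle-length condition on $\alpha$ and $\varphi$ means no conjugate of a nontrivial power of $a$ or $b$ fixes $1$, so by Kurosh again $H_\rho$ is free. The hypothesis $pq\geq 6$ enters here only to exclude the degenerate dihedral case $(p,q)=(2,2)$, where torsion-freeness no longer implies freeness in the sense required.

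Finally I would verify that the two constructions are mutually inverse. In one direction, starting from $H$, reconstructing $(\alpha,\varphi)$ via cosets, applying $\rho$, and taking the stabilizer of the coset $H$ returns $H$ itself, because $g\in H$ iff $g$ fixes the coset labelled $1$. In the other direction, starting from $(\alpha,\varphi)$ with root $1$ and taking $H_\rho$, the coset action of $\Delta^+$ on $\Delta^+/H_\rho$ is, via the bijection $Hg\mapsto \rho(g)(1)$, isomorphic as a pair of permutations to the original $(\alpha,\varphi)$, with the identity coset corresponding to the root. I expect the main subtlety to lie not in this formal bookkeeping but in aligning conventions: multiplication of permutations is from left to right, rooting is defined via the corner labelled $1$ rather than a dart, and the combinatorial/topological translation uses $\sigma=\varphi^{-1}\alpha^{-1}$; one must be careful that the labelling convention $H\leftrightarrow 1$ is compatible with the rooting convention of Subsection~\ref{s:hypermaps} so that the bijection is canonical and not merely one among several arbitrary choices.
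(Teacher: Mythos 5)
Your proposal is correct and follows essentially the same route as the paper's proof: the coset-action dictionary in both directions, with Kurosh's theorem converting torsion-freeness (equivalently, the absence of fixed points for nontrivial powers of $\alpha$ and $\varphi$, i.e.\ all cycles having full length $p$ resp.\ $q$) into freeness, plus the routine check that the constructions are mutually inverse. The only inaccuracy is your aside on the role of $pq\geq 6$: torsion-free subgroups of $\mathbb{Z}_2*\mathbb{Z}_2$ are still free by Kurosh, so that is not why $(2,2)$ is excluded, and the hypothesis plays no real role in the argument.
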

\begin{proof}
Let $H = \langle D; \alpha, \varphi \rangle$ be a rooted hypermap (with root $1$) from $\mathfrak{H}^r_{p,q}(n)$. Then there is an epimorphism from $\Delta^+ = \mathbb{Z}_p * \mathbb{Z}_q \cong \langle \varrho | \varrho^p = \varepsilon \rangle * \langle \delta | \delta^q = \varepsilon \rangle$ to the group $G(H) = \langle \alpha, \varphi \rangle$ given by $\varrho \mapsto \alpha$, $\delta \mapsto \varphi$, and $\Delta^+$ acts transitively on $D$ via this epimorphism. Let $\Gamma$ be the stabiliser subgroup of dart $1$. Then $[\Delta^+ : \Gamma]=n$ and the action of $\Delta^+$ on $D$ is equivalent to the action of $\Delta^+$ on the cosets of $\Gamma$. 

Moreover, $\Gamma$ cannot contain any conjugate of a non-trivial power of $\varrho$ or $\delta$ because $\alpha$ and $\varphi$ have no fixed points, and cycle structure is preserved under conjugation. Thus $\Gamma$ has no torsion elements, as any torsion element in $\Delta^+$ is conjugate to some power of either $\varrho$ or $\delta$. By the Kurosh theorem on subgroups of free products, $\Gamma$ is free.

On the other hand, a torsion-free finite index subgroup $\Gamma < \Delta^+$ gives rise to a combinatorial hypermap $H = \langle D_\Gamma; \alpha_\Gamma, \varphi_\Gamma \rangle$, with $D_\Gamma = \{ g\,\Gamma | g \in \Delta^+ \}$, $\alpha_\Gamma(g\Gamma) = (\varrho g) \Gamma$, $\varphi_\Gamma(g\Gamma) = (\delta g) \Gamma$. The root of $H$ corresponds to the coset $\varepsilon \Gamma$. 

Since $\Gamma$ is torsion-free, it does not contain any conjugates of $\varrho$, $\delta$, or their powers. Thus, $\alpha^p_\Gamma = \varepsilon$ and $\alpha^k_\Gamma \neq \varepsilon$ for $1 \leq k < p$, which implies that all disjoint cycles of $\alpha_\Gamma$ have length $p$. Indeed, a cycle in $\alpha_\Gamma$ has length $d$, $d \nmid p$, and once $d < p$, then $\alpha^d_\Gamma$ has fixed points. Thus $\Gamma$ contains a conjugate of $\varrho^d$, which is a contradiction to $\Gamma$ being torsion-free. Analogously, all disjoint cycles of $\varphi_\Gamma$ have length $q$, so it follows that $H \in \mathfrak{H}^r_{p,q}(n)$, where $n = [\Delta^+:\Gamma]$; then we use again that a torsion-free subgroup $\Gamma < \Delta^+$ is free.  
\end{proof}

By the definition of hypermap isomorphism, we analogously obtain the following. 

\begin{Lem}\label{lemma:hypermaps-and-subgroups-2}
Let $p, q > 1$ be two natural numbers such that $pq \geq 6$. There is a one-to-one correspondence between the set of isomorphism classes of connected oriented $(p,q)$-hypermaps on $n$ darts and the set of conjugacy classes of free subgroups of index $n$ in the group $\Delta^+ = \mathbb{Z}_p * \mathbb{Z}_q$.
\end{Lem}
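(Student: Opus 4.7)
The plan is to upgrade Lemma \ref{lemma:hypermaps-and-subgroups-1}, which is a bijection between rooted hypermaps and finite-index free subgroups, to the unrooted/conjugacy level by showing that the two equivalence relations involved, namely hypermap isomorphism (after forgetting the root) on the one side and conjugacy in $\Delta^+$ on the other, correspond under that bijection. Recall that the bijection of Lemma \ref{lemma:hypermaps-and-subgroups-1} sends a rooted hypermap $H = \langle D; \alpha, \varphi \rangle$ to the stabilizer $\Gamma(H) < \Delta^+$ of the root dart $1 \in D$ in the transitive $\Delta^+$-action determined by $\varrho \mapsto \alpha$, $\delta \mapsto \varphi$. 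It therefore suffices to verify that $\Gamma(H_1)$ and $\Gamma(H_2)$ are conjugate in $\Delta^+$ precisely when $H_1$ and $H_2$ become isomorphic as unrooted hypermaps.

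For one direction, starting from $\Gamma_2 = g\, \Gamma_1\, g^{-1}$ for some $g \in \Delta^+$, I would exhibit an explicit $\Delta^+$-equivariant bijection $\psi : \Delta^+/\Gamma_1 \to \Delta^+/\Gamma_2$, for instance $h\Gamma_1 \mapsto hg^{-1}\Gamma_2$, and check that it is well defined and intertwines the left $\Delta^+$-actions on both sides. Intertwining these actions means in particular that $\psi$ conjugates the permutation of $\Delta^+/\Gamma_1$ induced by $\varrho$ (resp.\ $\delta$) into that of $\Delta^+/\Gamma_2$, which under the dictionary of Lemma \ref{lemma:hypermaps-and-subgroups-1} is exactly the identity $\psi\, \alpha_1\, \psi^{-1} = \alpha_2$ and $\psi\, \varphi_1\, \psi^{-1} = \varphi_2$ from the definition of combinatorial hypermap isomorphism given in Section \ref{s:hypermaps}.

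For the reverse direction, suppose $\psi : D_1 \to D_2$ is an unrooted hypermap isomorphism between $H_1, H_2 \in \mathfrak{H}^r_{p,q}(n)$. Then $\psi$ intertwines the two $\Delta^+$-actions generated by $\alpha_i, \varphi_i$, so it carries the stabilizer of $1 \in D_1$ onto the stabilizer of $\psi(1) \in D_2$. Since point-stabilizers for any two points of a transitive $\Delta^+$-action are conjugate in $\Delta^+$, the stabilizer of $\psi(1)$ differs from $\Gamma(H_2)$ by a conjugation in $\Delta^+$, and hence $\Gamma(H_1)$ and $\Gamma(H_2)$ lie in a common conjugacy class. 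Combined with the first direction, this shows that the bijection of Lemma \ref{lemma:hypermaps-and-subgroups-1} descends to the claimed bijection between isomorphism classes of connected oriented $(p,q)$-hypermaps on $n$ darts and conjugacy classes of free subgroups of index $n$ in $\Delta^+$. I do not anticipate any genuine obstacle, as this is in essence the familiar correspondence between transitive $G$-sets up to $G$-equivariant isomorphism and subgroups of $G$ up to conjugation; the only item requiring careful bookkeeping is the precise form of the intertwining bijection (the choice between $g$ and $g^{-1}$, and between left and right cosets) so that it is consistent with the convention of Section \ref{s:hypermaps} that permutations compose from left to right.
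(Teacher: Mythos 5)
Your argument is correct and is precisely the one the paper intends: the paper dispatches this lemma with the single remark that it follows ``by the definition of hypermap isomorphism'' analogously to Lemma \ref{lemma:hypermaps-and-subgroups-1}, and what you have written is a careful unpacking of exactly that standard dictionary between transitive $\Delta^+$-sets up to equivariant isomorphism and subgroups up to conjugacy. Both directions check out (well-definedness and equivariance of $h\Gamma_1 \mapsto hg^{-1}\Gamma_2$, and conjugacy of point-stabilizers under transitivity), and your closing caveat about matching the left-to-right composition convention is the only bookkeeping point genuinely at stake.
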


\section{Counting free subgroups and hypermaps}

We proceed by computing the numbers $|\mathfrak{H}^r_{p,q}(n)|$ of connected oriented rooted $(p,q)$-hypermaps on $n$ darts. In order to do so we shall use species theory (see Section 2.3) and generalise the results of \cite{Petitot-Vidal}.

Let $E$ be the species of sets, $C_i$ the species of cyclic permutations of length $i\geq 2$, $S_i$ the species of permutations with cycles of length $i$ only and no fixed points (assume $S_i[\emptyset] = \{ \emptyset \}$), $H^{*}$ the species of labelled $(p,q)$-hypermaps (not necessarily connected) on $n$ darts (assume $H^*[\emptyset] = \{\emptyset\}$), $H$ the species of connected labelled $(p,q)$-hypermaps on $n$ darts (in contrast, here $H[\emptyset] = \emptyset$), and $H^{\circ}$ the species of connected rooted $(p,q)$-hypermaps on $n$ darts. Thus, an instance of species $H^{*}$ is a (possibly empty) disjoint union of several instances of species $H$, while an instance of $H^{\circ}$ is an instance of $H$ with all but one of its labels discarded (that is, only the root has a label). 

The following combinatorial equations describe the relations between the species:
\begin{equation}\label{eq:species-1}
S_p = E(C_p),\,\, S_q = E(C_q),\,\, H^* = S_p \times S_q.
\end{equation}
Intuitively, this means that each permutation of $S_p$ has a unique decomposition into cycles of length $p$, and each hypermap is uniquely determined by a pair of permutations from $S_p$ and $S_q$.  
Furthermore
\begin{equation}\label{eq:species-2}
H^* = E(H),\,\, H^\circ = Z\cdot H^\prime,
\end{equation}
where $Z$ is the singleton species with generating function $Z(z) = z$, and $H^\prime$ means species differentiation, c.f. \cite[\S 1.4, D\'{e}finition 5]{Bergeron-et-al}.

Since the generating function for the species of sets $E$ is $\exp(z)$ \cite[\S 1.2, Exemples 2]{Bergeron-et-al}, 
the respective generating functions will be 
\begin{equation}\label{eq:gen-func-1}
C_p(z) = \frac{z^p}{p},\,\, C_q(z) = \frac{z^q}{q},
\end{equation}
and thus
\begin{equation}\label{eq:gen-func-2}
S_p(z) = \exp\left( \frac{z^p}{p} \right) = \sum^{\infty}_{k=0} \frac{z^{pk}}{p^k}\,\frac{1}{k!},
\end{equation}
\begin{equation}\label{eq:gen-func-3}
S_q(z) = \exp\left( \frac{z^q}{q} \right) = \sum^{\infty}_{k=0} \frac{z^{qk}}{q^k}\,\frac{1}{k!}.
\end{equation}
 
We shall use the notation
$$ \langle p,q \rangle:=lcm(p,q) \ \textrm{and} \  (p,q):= gcd(p,q),$$
where \textit{lcm} and \textit{gcd} denote as usual the least common multiple and greatest common divisor, respectively.

Identities (\ref{eq:species-1}), (\ref{eq:gen-func-2}) and (\ref{eq:gen-func-3}) imply that
\begin{equation}\label{eq:gen-func-4}
H^*(z) = S_p(z)\odot S_q(z) = \sum^{\infty}_{k=0} z^{\langle p,q \rangle k}\,\, \frac{(\langle p,q \rangle k)!}{p^{k \langle p,q \rangle/p} \left( \frac{\langle p,q \rangle k}{p} \right)!\,\, q^{k \langle p,q \rangle/q} \left( \frac{\langle p,q \rangle k}{q} \right)!},
\end{equation}
where $\odot$ denotes the Hadamard product of $S_p(z)$ and $S_q(z)$, and is the functional version of a Cartesian product.
Note that we can express $H^*(z)$ as $ H^*(z)= f(c z^{\langle p,q \rangle})$ for a suitable constant $c > 0$, and $f(z) = \sum^{\infty}_{k=0} f_k \frac{z^k}{k!}$. If we choose the constant $c$ so that 
\begin{equation}\label{eq:gen-func-5}
c^{( p,q )} = \langle p,q \rangle^{(p-1)(q-1)-1},
\end{equation}
then for $k \geq 0$
\begin{equation}\label{eq:gen-func-6}
\frac{f_{k+1}}{f_k} = \frac{ \prod^{\frac{pq}{(p,q)}-1}_{i=1} \, \left( k + \frac{i (p,q)}{pq} \right) }{ \prod^{\frac{p}{(p,q)}-1}_{i=1}\, \left( k + \frac{i (p,q)}{p} \right) \,\, \prod^{\frac{q}{(p,q)}-1}_{i=1}\, \left( k + \frac{i (p,q)}{q} \right) }.
\end{equation}
Since $f(0)=1$, the series $f(z)$ is hypergeometric, and we obtain
\begin{equation}\label{eq:gen-func-7}
f(z) = {}_{\frac{pq}{(p,q)} - 1}F_{\frac{p+q}{(p,q)} - 2}\left[ \begin{array}{cc}
\frac{i(p,q)}{pq},& i=1,\dots, \frac{pq}{(p,q)}-1\\
\frac{j(p,q)}{p}, j=1,\dots, \frac{p}{(p,q)}-1;& \frac{j(p,q)}{q}, j=1,\dots, \frac{q}{(p,q)}-1
\end{array}; z \right].
\end{equation} 

Let us observe that whenever $p$ or $q$ is a divisor of $i$, \eqref{eq:gen-func-6} can be simplified so that $\floor*{ \frac{q}{(p,q)} - \frac{1}{p} } + \floor*{ \frac{p}{(p,q)} - \frac{1}{q} } = \frac{p+q}{(p,q)} - 2$ terms from both its numerator and denominator cancel out. Thus \eqref{eq:gen-func-7} turns into
\begin{equation}\label{eq:gen-func-8}
f(z) = {}_{N}F_0\left[ \begin{array}{c}
\frac{i(p,q)}{pq}, i=1,\dots,\frac{pq}{(p,q)}-1, p \nmid i, q \nmid i\\
\ldots
\end{array}; z \right],
\end{equation}
with $N = 1 + \frac{(p-1)(q-1) - 1}{(p,q)}$.

Since $f(z)$ is hypergeometric and thus holonomic, the growth function $H^*(z) = f(x)$, with $x = c\, z^{\langle p,q \rangle}$, is also holonomic. It is also clear that $f(x)$ has convergence radius $0$. Thus, by \cite[Corollary 2]{Harris-Sibuya}, $\frac{1}{f(x)}$ is non-holonomic, and $\frac{f^\prime(x)}{f(x)}$ is transcendental (c.f. also \cite[Proposition 3.1.5]{Ch-thesis}). 

From \eqref{eq:species-2} we have the first equality below, and from $H^*(z) = f(x)$ the second:
\begin{equation}\label{eq:gen-func-9}
H^\circ(z) = z\, \frac{d}{dz} \log H^*(z)=\langle p,q \rangle\, x\, \frac{f^\prime(x)}{f(x)}.
\end{equation}
Thus $H^\circ(z)$ is transcendental, and we have shown the following.

\begin{Thm}\label{thm:p-q-hypermaps-growth-transcendental}
The generating series for the number of connected rooted $(p,q)$-hypermaps on $n$ darts $$H^\circ(z) = \sum^{\infty}_{n=0} | \mathfrak{H}^r_{p,q}(n)| z^n$$ is transcendental, and the asymptotic expansion for its non-zero coefficients is 
\begin{equation*}
[z^{\langle p,q \rangle k}] H^\circ(z) \approx c_0 k^{c_1 k + \frac{1}{2}}\,\, e^{ c_2 k}, \mbox{ as } k \rightarrow \infty,
\end{equation*}
where $c_0, c_1, c_2 > 0$ are constants that depend only on $p$ and $q$.
\end{Thm}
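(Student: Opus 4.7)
The transcendence half of the theorem is essentially finished by the computation preceding its statement: since $f(x)$ is a formally holonomic ${}_{N}F_{0}$ series with zero radius of convergence, \cite[Corollary 2]{Harris-Sibuya} makes $1/f$ non-holonomic, hence $f'/f$ is transcendental, and so is $H^\circ(z) = \langle p,q\rangle\, x\, f'(x)/f(x)$. I would open the proof with a single short paragraph restating this chain.

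The asymptotic is the substantive part. My plan is to pass through the logarithmic series $L(x) := \log f(x) = \sum_{k \geq 1} L_k x^k$. Setting $m := \langle p,q\rangle$, equation (\ref{eq:gen-func-9}) becomes
\begin{equation*}
H^\circ(z) \;=\; m \sum_{k \geq 1} k\, L_k\, c^k\, z^{m k},
\qquad \text{so} \qquad
[z^{m k}]\, H^\circ(z) \;=\; m\, c^k\, k\, L_k.
\end{equation*}
Everything reduces to an asymptotic for $L_k$, which I would establish by comparing with the coefficients of $f$: the claim is $L_k \sim f_k$ as $k \to \infty$. The identity $f L' = f'$ yields the recursion
\begin{equation*}
(k+1) L_{k+1} \;=\; (k+1) f_{k+1} \;-\; \sum_{i=1}^{k} (k-i+1)\, L_{k-i+1}\, f_i,
\end{equation*}
and since (\ref{eq:gen-func-8}) gives $N \geq 2$ together with $f_{k+1}/f_k \sim k^{N-1}$, the sequence $f_k$ grows super-exponentially and $f_i f_{k-i+1}/f_{k+1} = O(k^{-(N-1) i})$ for each fixed $i$. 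An induction on $k$, using this bound to majorise $|L_j|$ by $2 f_j$, shows the convolution tail above is $o(f_{k+1})$, so $L_{k+1} = f_{k+1}(1 + o(1))$.

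With $L_k \sim f_k$ in hand, the final step is to compute $f_k$ itself. From (\ref{eq:gen-func-8}) we have $f_k = \prod_{a \in A} (a)_k / k!$ for the $N$-element set $A$ of surviving numerator parameters. Substituting the Pochhammer asymptotic (\ref{Pochhammer}) into each factor and Stirling into $k!$ yields
\begin{equation*}
f_k \;\approx\; \widetilde{C}\, e^{-(N-1) k}\, k^{(N-1) k \,+\, \sigma \,-\, (N+1)/2},
\qquad \sigma := \sum_{a \in A} a,
\end{equation*}
and the theorem's asymptotic follows with $c_1 = \log c - (N-1)$, $c_0 = (N-1)/2 - \sigma$, and $C$ an explicit constant involving $\prod_a \Gamma(a)^{-1}$. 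The technically delicate step is the second one: controlling the convolution tail to conclude $L_k \sim f_k$. Combinatorially this is the assertion that almost every labelled $(p,q)$-hypermap is connected, a phenomenon familiar in exponential enumeration but requiring the super-exponential growth of $f_k$ to justify rigorously.
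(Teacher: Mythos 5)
Your proposal is correct and follows essentially the same route as the paper: transcendence via Harris--Sibuya applied to the divergent ${}_NF_0$ series, and the asymptotics by transferring the coefficient asymptotics of $H^*$ to those of $\log H^*$ and then applying the Pochhammer/Stirling expansions. The only difference is that where the paper cites Bender, Odlyzko, and Drmota--Nedela for the transfer step, you prove it inline via the convolution recursion coming from $fL'=f'$, which is a sound (and self-contained) way to justify the same fact.
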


\begin{proof}
The proof of the formula and transcendence is given above; it only remains to justify the asymptotics of $[z^{\langle p,q \rangle k}] H^\circ(z)$. The series $H^\circ(z)$ has non-negative coefficients and is rapidly divergent, and thus we may apply the technique described in \cite[Theorem 4.1]{Drmota-Nedela} (see also \cite{Bender} and \cite[Theorem~7.2]{Odlyzhko}). Then the coefficients of $H^*(z)$ are given by 
\begin{equation}\label{eq:asympt-1}
[z^{\langle p,q \rangle k}] H^*(z) = \frac{c^k}{k!} \, \prod^N_{i=1} (a_i)_k \, 
\end{equation}  
and using the asymptotic expansion for the Pochhammer symbol in (\ref{Pochhammer}) we obtain the asymptotic expansion for the coefficients of $\log H^*(z)$, from \eqref{eq:asympt-1} and subsequently for the coefficients of $H^{\circ}(z) = z\, \frac{d}{dz} \log H^*(z)$ as 
\begin{equation}\label{eq:asympt-2}
[z^{\langle p, q \rangle k}] H^\circ(z) \approx \frac{ (2\pi)^{\frac{N-1}{2}} \langle p,q \rangle }{\prod_i \Gamma(a_i)} \,\, k^{(N-1)k - \frac{N-1}{2} + \sum_i a_i}\,\, e^{ (1 + \log c - N) k}, \mbox{ as } k \rightarrow \infty,
\end{equation}
where $N = 1 + \frac{(p-1)(q-1) - 1}{( p, q )}$, and the constants $c$ and $a_i$ satisfy $c^{( p, q )} = \langle p, q \rangle^{(p-1)(q-1)-1}$, $a_i = \frac{i ( p, q )}{pq}$, with $i = 1,\dots, \frac{pq}{( p, q )}-1$, $p \nmid i$, $q \nmid i$. Also, notice that $\sum_i a_i = \frac{N}{2}$, and thus the above formula takes the required form.
\end{proof}

By Lemma~\ref{lemma:hypermaps-and-subgroups-1}, since the elements of $\mathfrak{H}^r_{p,q}(n)$ are in bijection with free subgroups of index $n$ in $\Delta^+$, we obtain:

\begin{Thm}\label{thm:subgroup-growth-transcendental}
Let $\Delta^+ = \mathbb{Z}_p * \mathbb{Z}_q$ be a free product of cyclic groups with $p, q > 1$, $pq\geq 6$. Then the subgroup growth series $S_{f}(z) = \sum^{\infty}_{n=0} s_{f}(n) z^n$ for the numbers $s_{f}(n)$ of free subgroups of index $n$ of $\Delta^+$ coincides with $H^\circ(z)$.
\end{Thm}
Below, we consider several examples of the generating series $H^\circ(z) = \sum^{\infty}_{n=0} | \mathfrak{H}^r_{p,q}(n)| z^n$ given by formulas \eqref{eq:gen-func-8} -- \eqref{eq:gen-func-9} for various values of $p$ and $q$. In order to facilitate computations with power series, we use the SAGE code from Appendix~I. 

\begin{Ex}\label{example:rooted-triangulations}
For the number of rooted triangulations on $n$ darts we have $H^\circ(z) = \sum^{\infty}_{n=0} |\mathfrak{H}^r_{2,3}(n)|z^n = 5\, z^6 + 60\, z^{12} + 1105\, z^{18} + 27120\, z^{24} + 828250\, z^{30} + 30220800\, z^{36} + 1282031525\, z^{42} + 61999046400\, z^{48} + 3366961243750\, z^{54} + 202903221120000\, z^{60} + \dots$. The coefficient sequence of $H^\circ(z)$ has index A062980 in the OEIS \cite{OEIS}.

Note that, in this case, if we explicitly compute all the constants in the asymptotic equality \eqref{eq:asympt-2} from the proof of Theorem \ref{thm:p-q-hypermaps-growth-transcendental}, then $| \mathfrak{H}^r_{2,3}(6k) |\approx C \frac{6^k}{e^k}k^{k-\frac{1}{2}}$. The latter coincides with the formula for free subgroups of index $6k$ in the modular group from \cite[p.~1277]{Stothers}, after applying Stirling's approximation.
\end{Ex}

\begin{Ex}\label{example:rooted-quadrangulations}
For the number of rooted quadrangulations on $n$ darts we have $H^\circ(z) = \sum^{\infty}_{n=0} |\mathfrak{H}^r_{2,4}(n)| z^n = 3\, z^4 + 24\, z^8 + 297\, z^{12} + 4896\, z^{16} + 100278\, z^{20} + 2450304\, z^{24} + 69533397\, z^{28} + 2247492096\, z^{32} + 81528066378\, z^{36} + 3280382613504\, z^{40} + \dots$. The coefficient sequence of $H^\circ(z)$ has index A292186 in the OEIS \cite{OEIS}.
\end{Ex}

\begin{Ex}\label{example:rooted-bicoloured-triangulations}
For the number of rooted bi-coloured triangulations on $n$ darts we have $H^\circ(z) = \sum^{\infty}_{n=0} |\mathfrak{H}^r_{3,3}(n)| z^n = 2\, z^3 + 12\, z^6 + 112\, z^9 + 1392\, z^{12} + 21472\, z^{15} + 394752\, z^{18} + 8421632\, z^{21} + 204525312\, z^{24} + 5572091392\, z^{27} + 168331164672\, z^{30} + \dots$. The coefficient sequence of $H^\circ(z)$ has index A292187 in the OEIS \cite{OEIS}.
\end{Ex}

\begin{Ex}\label{example:rooted-fullerene-hypermaps}
For the number of rooted fullerene hypermaps on $n$ darts we have $H^\circ(z) = \sum^{\infty}_{n=0} |\mathfrak{H}^r_{5,6}(n)| z^n = 
758038579710193926144\, z^{30} + 194568955255295107105$ $14063909668515373421277741056000\, z^{60} + 8904485565951809034397866445591191272830$ $319687215549893902610979100158748795888625254400\, z^{90} + \dots$.
\end{Ex}

\begin{Ex}\label{example:subgroup-growth-series}
The growth series counting the finite index free subgroups in $\mathbb{Z}_2*\mathbb{Z}_3$, $\mathbb{Z}_2*\mathbb{Z}_4$, $\mathbb{Z}_3*\mathbb{Z}_3$ and $\mathbb{Z}_5*\mathbb{Z}_6$ are given in Examples \ref{example:rooted-triangulations}, \ref{example:rooted-quadrangulations}, \ref{example:rooted-bicoloured-triangulations} and \ref{example:rooted-fullerene-hypermaps}, respectively.   
\end{Ex}

\section{The Riccati hierarchy and recurrence relations}\label{s:recurrence}

As we mentioned before, the generating series $H^\circ(z)$ cannot be algebraic. Nevertheless, it does satisfy a non-linear \textit{differential} equation, and although this fact does not make $H^\circ(z)$ holonomic, a certain recurrence relation holds for its coefficients $h_{p,q}(n) = | \mathfrak{H}^r_{p,q}(n)|$.

Recall that we expressed $H^*(z)$ as  $H^*(z)= f(x)$, where $x = c\, z^{\langle p,q \rangle}$. By \eqref{eq:gen-func-5} -- \eqref{eq:gen-func-8},  the function $f(x)$ is hypergeometric and by (\ref{HGSdiffeq}) it satisfies the hypergeometric differential equation 
\begin{equation}\label{eqn:riccati-1}
\vartheta f(x) = x\,\, \prod^N_{i=1} (\vartheta + a_i) f(x),
\end{equation} 
where $\vartheta = x\, \frac{d}{dx}$, and $a_i$, $i=1, \dots, N$, are the parameters of the hypergeometric function in equality \eqref{eq:gen-func-8}, with $N = 1 + \frac{(p-1)(q-1)-1}{( p,q )}$. 

Let $w(x):= x \frac{f'(x)}{f(x)}$. Then $w(x)$ determines the growth series $H^\circ(z) = \sum_{n\geq 0} h_{p,q}(n)\, z^n$ since \eqref{eq:gen-func-9} leads immediately to
\begin{equation}\label{eqn:riccati-2}
H^\circ(z) = \langle p,q \rangle\, x\, \frac{f'(x)}{f(x)} = \langle p,q \rangle\ w(x).
\end{equation}

\begin{Lem}
The function $w(x)$ satisfies 
\begin{equation}\label{eqn:riccati-3}
w(x) = x\,\, \sum^N_{i=0} \sigma_{N-i}(a_1, \dots, a_N)\, w_i(x), 
\end{equation} 
where $\sigma_i$ is the $i$-th elementary symmetric function, and the functions $w_i(x)$ are defined as
\begin{equation}\label{eqn:riccati-4}
w_0(x) = 1,\,\, w_1(x) = w(x),\,\, w_i(x) = x \,\, \frac{d}{dx} w_{i-1}(x) + w(x) w_{i-1}(x), \mbox{ for } i\geq 2.
\end{equation}
\end{Lem}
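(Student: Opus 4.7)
The plan is to show that the recursively defined quantities $w_i(x)$ coincide with the normalised logarithmic ``iterated'' quantities $\vartheta^i f(x)/f(x)$, after which the identity \eqref{eqn:riccati-3} will follow from expanding the operator on the right-hand side of the hypergeometric equation \eqref{eqn:riccati-1} into powers of $\vartheta$.

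First I would introduce the auxiliary family $W_i(x) := \vartheta^i f(x)/f(x)$, where $\vartheta = x \frac{d}{dx}$. Clearly $W_0 = 1$ and $W_1 = \vartheta f/f = x f'(x)/f(x) = w(x)$, matching the initial data for $w_0$ and $w_1$. For the inductive step, writing $\vartheta^{i-1} f = W_{i-1} f$ and applying $\vartheta$ using the Leibniz rule gives
\begin{equation*}
W_i = \frac{\vartheta(W_{i-1} f)}{f} = \vartheta W_{i-1} + W_{i-1}\, \frac{\vartheta f}{f} = x \frac{d}{dx} W_{i-1} + w\, W_{i-1},
\end{equation*}
which is exactly the recurrence \eqref{eqn:riccati-4}. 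Hence $W_i = w_i$ for all $i \geq 0$.

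Next, since the operators $\vartheta + a_i$ all commute (they are polynomials in the single operator $\vartheta$ with scalar coefficients $a_i$), the product expands formally like an ordinary polynomial in $\vartheta$:
\begin{equation*}
\prod_{i=1}^{N}(\vartheta + a_i) = \sum_{k=0}^{N} \sigma_{N-k}(a_1,\dots,a_N)\, \vartheta^k.
\end{equation*}
Applying both sides to $f(x)$, dividing through by $f(x)$, and using $\vartheta^k f/f = w_k$ from the first step, the hypergeometric equation \eqref{eqn:riccati-1} rewrites as
\begin{equation*}
w(x) = \frac{\vartheta f}{f} = x \cdot \frac{\prod_{i=1}^{N}(\vartheta + a_i) f}{f} = x \sum_{k=0}^{N} \sigma_{N-k}(a_1,\dots,a_N)\, w_k(x),
\end{equation*}
which is precisely \eqref{eqn:riccati-3}.

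There is no real obstacle here; the argument is a clean bookkeeping exercise, and the only point that deserves a word of care is the commutativity of the operators $\vartheta + a_i$, which justifies the elementary-symmetric-function expansion. The content of the lemma is thus the observation that $w_i = \vartheta^i f/f$, which converts the linear hypergeometric equation for $f$ into a non-linear (Riccati-type) equation for $w = \vartheta \log f$.
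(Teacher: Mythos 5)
Your proposal is correct and follows essentially the same route as the paper: the key step in both is the inductive identity $\vartheta^i f = w_i\, f$ (proved via the Leibniz rule for $\vartheta$), combined with the expansion of $\prod_{i=1}^N(\vartheta + a_i)$ into elementary symmetric functions of the $a_i$. Your explicit remark on the commutativity of the operators $\vartheta + a_i$ is a small point of care the paper leaves implicit, but the argument is otherwise identical.
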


\begin{proof}
By (\ref{eqn:riccati-1}) 
\begin{equation*}
\vartheta f(x) = x\, \prod^N_{i=1} (\vartheta + a_i) f(x) = x\, \sum^N_{i=0} \sigma_{N-i}(a_1, \dots, a_N)\, \vartheta^i f(x),
\end{equation*}
and from the definition of $w(x)$ it follows that $\vartheta f(x) = w(x)\, f(x) = w_1(x)\, f(x)$; we will prove by induction that $\vartheta^i f(x)= w_{i}(x)\cdot f(x)$, assuming it holds for all values up to $i-1$:
\begin{equation*}
\vartheta^i f(x) = \vartheta( w_{i-1}(x)\cdot f(x) ) = \vartheta w_{i-1}(x)\cdot f(x) + w_{i-1}(x)\cdot \vartheta f(x) = 
\end{equation*}
\begin{equation*}
= \left( x\,\, \frac{d}{dx} w_{i-1}(x) + w(x) w_{i-1}(x) \right)\,\, f(x) = w_{i}(x)\cdot f(x).
\end{equation*}

Now \eqref{eqn:riccati-3} -- \eqref{eqn:riccati-4} follow immediately.
\end{proof}

The family of equations \eqref{eqn:riccati-3} -- \eqref{eqn:riccati-4} is indexed by the integers $p, q > 1$, $pq \geq 6$, and the first instance has $p=2$, $q=3$; this turns out to be the classical Riccati equation
\begin{equation}\label{eqn:riccati-5}
w(x) = x^2 w'(x) + x w(x) + x w^2(x) + \frac{5}{36}x.
\end{equation}

All the equations \eqref{eqn:riccati-3} -- \eqref{eqn:riccati-4} constitute a part of the Riccati hierarchy (c.f. \cite[Equation (7.2)]{GdL}), and for fixed $p,q$ they give rise to non-linear recurrence relations, with polynomials in $n$ as coefficients, for the numbers $h_{p,q}(n)$. Such equations (and, subsequently, the associated recurrence relations) can be computed with the help of the SAGE code in Appendix~II. 

We would like to remark that $w(x)$ is a solution to a \textit{single} appropriate equation in the Riccati hierarchy \eqref{eqn:riccati-3} -- \eqref{eqn:riccati-4}, and not \textit{each} of them, as one may expect by analogy to the KP hierarchy \cite{Goulden-Jackson, Okounkov}.

\begin{Ex}\label{example:recurrence-2-3}
If we consider the formal series expansion $w(x) = \sum^\infty_{n=0} w_n\, x^n$, then by the Riccati equation \eqref{eqn:riccati-5} we obtain
\begin{equation*}
w_{n+1} = (n+1)\, w_n + \sum^n_{i=0} w_i w_{n-i}, \mbox{ for } n\geq 2,
\end{equation*}
with initial conditions $w_0 = 0$ and $w_1 = \frac{5}{36}$ (c.f. \cite{Petitot-Vidal}). Thus
\begin{equation}\label{eqn:riccati-6}
h_{2,3}(6n+6) = 6(n+1)\, h_{2,3}(6n) + \sum^n_{i=0} h_{2,3}(6i)\, h_{2,3}(6n-6i), \mbox{ for } n\geq 2,
\end{equation} 
with $h_{2,3}(0) = 0$, $h_{2,3}(6) = 5$, and $h_{2,3}(d) = 0$ for any non-zero $d \neq 0 \mod 6$. 

This is a recurrence relation for the number of rooted triangulations on $n$ darts (equivalently, $n/3$ triangles). The corresponding number sequence can be also expressed as the $S(6, -8, 1)$ sequence from \cite{Martin}.
\end{Ex}

\begin{Ex}\label{example:recurrence-2-4}
In the case $p=2, q=4$ we arrive at the following equation for $w(x)$:
\begin{equation}\label{eqn:riccati-7}
w(x) = x^2 w'(x) + x w(x) + x w^2(x) + \frac{3}{16}\, x,
\end{equation}
which is also a classical Riccati equation. It gives rise to the following relation between the coefficients of the series $w(x) = \sum^\infty_{n=0} w_n\, x^n$:
\begin{equation*}
w_{n+1} = (n+1)\, w_n + \sum^n_{i=0} w_i w_{n-i}, \mbox{ for } n\geq 2,
\end{equation*}
with initial conditions $w_0 = 0$ and $w_1 = \frac{3}{16}$. Thus
\begin{equation}\label{eqn:riccati-8}
h_{2,4}(4n+4) = 4(n+1)\, h_{2,4}(4n) + \sum^n_{i=0} h_{2,4}(4i)\, h_{2,4}(4n-4i), \mbox{ for } n\geq 2,
\end{equation}
with $h_{2,4}(0) = 0$, $h_{2,4}(4) = 3$, and $h_{2,4}(d) = 0$ for any non-zero $d \neq 0 \mod 4$, 
and we obtained a recurrence for the number of rooted quadrangulations on $n$ darts (equivalently, $n/4$ squares). 
The corresponding number sequence can be also expressed as the $S(4, -6, 1)$ sequence from \cite{Martin}.
\end{Ex}

\begin{Ex}\label{example:recurrence-3-3}
In the case $p=q=3$ we arrive at yet another Riccati equation for $w(x)$:
\begin{equation}\label{eqn:riccati-9}
w(x) = x^2 w'(x) + x w(x) + x w^2(x) + \frac{2}{9}\, x,
\end{equation}
which gives rise to the recurrence relation for the number of rooted bi-coloured triangulations on $n$ darts (equivalently, with $n/3$ triangles):
\begin{equation}\label{eqn:riccati-10}
h_{3,3}(3n+3) = 3(n+1)\, h_{3,3}(3n) + \sum^n_{i=0} h_{3,3}(3i)\, h_{3,3}(3n-3i), \mbox{ for } n\geq 2,
\end{equation}
with $h_{3,3}(0) = 0$, $h_{3,3}(3) = 2$, and $h_{3,3}(d) = 0$ for any non-zero $d \neq 0 \mod 3$.
The corresponding sequence can be also expressed as the $S(3, -5, 1)$ sequence from \cite{Martin}.
\end{Ex}

Now consider the generating function $H^\circ(z)$ for any family of $(p,q)$-hypermaps associated with the classical Riccati equation, that is, the case when $N = 2$ in \eqref{eqn:riccati-3} -- \eqref{eqn:riccati-4}, or equivalently, when $p$ and $q$ satisfy the identity
\begin{equation*}
(p-1)\,\,(q-1) = ( p,q ) + 1.
\end{equation*}
An easy calculation shows that the only possible values are (i) $p=2, q=3$, (ii) $p=2, q=4$ and (iii) $p=3, q=3$, which correspond to Examples \ref{example:recurrence-2-3}, \ref{example:recurrence-2-4} and \ref{example:recurrence-3-3}, respectively. These are exactly the cases when our approach produces recurrence relations similar to \cite[Formula 9]{Stothers}. In each case the Riccati equation has the form 
\begin{equation}
w'(x)=\frac{w(x) - x w(x) - x w^2(x) - kx}{x^2},
  \end{equation}
  where $k$ is a constant. This is the form of the equation in \cite[Theorem 5.2]{Klazar} of Klazar, so one can easily deduce the following:
  \begin{Cor}\label{cor:non-holonomic}
The generating function $H^\circ(z)$ for any family of $(p,q)$-hypermaps associated with the classical Riccati equation is non-holonomic.
\end{Cor}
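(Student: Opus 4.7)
The plan is to transfer Klazar's non-holonomicity result \cite[Theorem 5.2]{Klazar} from $w(x)$ to $H^\circ(z)$. From \eqref{eqn:riccati-2} we have $H^\circ(z) = \langle p,q\rangle\, w(c z^m)$ with $m := \langle p,q\rangle$; in particular $H^\circ$ is supported on exponents divisible by $m$, so writing $H^\circ(z) = \widetilde{H}(z^m)$ yields $\widetilde{H}(u) = \langle p,q\rangle\, w(cu)$. Since $m$-sections of D-finite power series are D-finite, and linear changes of variable preserve D-finiteness in both directions, $H^\circ(z)$ is holonomic if and only if $w(x)$ is. It therefore suffices to show that $w(x)$ is not D-finite.

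Next I would verify that $w(x)$ satisfies exactly the hypothesis of Klazar's theorem. In each of the three cases $(p,q)\in\{(2,3),(2,4),(3,3)\}$ for which $N=2$, Examples \ref{example:recurrence-2-3}--\ref{example:recurrence-3-3} already establish that $w$ is a solution of
\begin{equation*}
w(x) = x^2 w'(x) + x w(x) + x w(x)^2 + k x,
\end{equation*}
with $k \in \{5/36,\ 3/16,\ 2/9\}$. This rearranges to the classical Riccati form displayed just before the statement of the Corollary and matches the shape of the equation treated in \cite[Theorem 5.2]{Klazar}; the initial condition $w(0)=0$ needed there is automatic from the definition $w(x) = x f'(x)/f(x)$ together with $f(0)=1$.

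Applying Klazar's theorem then delivers the non-holonomicity of $w(x)$, and by the reduction in the first paragraph, of $H^\circ(z)$. The one place that warrants care, though not a deep one, is the transfer of non-holonomicity across the substitution $x = cz^m$ in \emph{both} directions: a naive closure argument only moves D-finiteness one way and leaves the converse open, so I would frame the reduction through the $m$-section of $H^\circ$, for which the needed closure property is standard.
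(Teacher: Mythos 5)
Your proposal is correct and follows essentially the same route as the paper: in each of the three admissible cases the equation for $w(x)$ is exactly the Riccati equation of Klazar's Theorem 5.2, which gives non-holonomicity of $w$. The only difference is that you make explicit the transfer of non-holonomicity from $w(x)$ to $H^\circ(z) = \langle p,q\rangle\, w(c z^{\langle p,q\rangle})$ via the section and change-of-variable closure properties --- a step the paper leaves implicit and which is worth spelling out (though note that your local symbol $\widetilde{H}$ for the section clashes with the paper's notation for the isomorphism-class series).
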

\begin{Que} Are the generating functions $H^\circ(z)$ associated with higher-order equations in the Riccati hierarchy also non-holonomic? 
\end{Que}

All the above results and questions easily translate into the language of subgroup generating functions for the number of free subgroups of finite index in $\mathbb{Z}_p*\mathbb{Z}_q$.  

\section{Subgroup conjugacy and hypermap isomorphism}

In order to count the conjugacy classes of index $n$ free subgroups in $\Delta^+$, we shall use Lemma~\ref{lemma:hypermaps-and-subgroups-2} and first compute the number $|\mathfrak{H}_{p,q}(n)|$ of isomorphism classes of $(p,q)$-hypermaps on $n$ darts.

We first recall the equations that hold for the species $H^*$ of labelled (not necessarily connected) $(p,q)$-hypermaps and the species $H$ of labelled connected $(p,q)$-hypermaps. As before, let $S_i$ be the species of permutations acting on the set of darts $D = [n]$, with cycles of length $i \geq 2$ only and without fixed points. Then 
\begin{equation}\label{eq:species-3}
H^* = S_p \times S_q, \,\, H^* = E(H).
\end{equation}

Let $C_p$ and $C_q$ be the species of $p$- and $q$-cycles, respectively, with their corresponding cycle indices
\begin{equation}\label{eq:species-4}
\mathcal{Z}_{C_p}(z_1, z_2, \dots, z_p) = \frac{1}{p}\, \sum_{d|p} \phi(d)\, z^{p/d}_d,\,\, \mathcal{Z}_{C_q}(z_1, z_2, \dots, z_q) = \frac{1}{q}\, \sum_{d|q} \phi(d)\, z^{q/d}_d,
\end{equation}
where $\phi(d)$ is the Euler totient function. 

The species $E$ of sets has cycle index
\begin{equation}\label{eq:species-5}
\mathcal{Z}_E(z_1, z_2, \dots) = \exp \left( \sum_{k\geq 1} \frac{z_k}{k} \right).
\end{equation}

Then, according to \cite[\S 1.4, Th\'{e}or\`{e}me 2 (c)]{Bergeron-et-al}, the cycle index of $S_p$ becomes
\begin{equation}\label{eq:species-6}
\mathcal{Z}_{S_p}(z_1, z_2, \dots) = \mathcal{Z}_E\left( \frac{1}{p}\, \sum_{d|p} \phi(d)\, z^{p/d}_d,\, \frac{1}{p}\, \sum_{d|p} \phi(d)\, z^{p/d}_{2d},\, \frac{1}{p}\, \sum_{d|p} \phi(d)\, z^{p/d}_{3d},\, \dots \right) = 
\end{equation}
\begin{equation}\label{eq:species-7}
 = \exp\left( \sum_{k\geq 1}\, \frac{1}{pk}\, \sum_{d|p}\, \phi(d)\, z^{p/d}_{kd} \right) = \exp\left( \sum_{k\geq 1}\, \sum_{d|p}\, \frac{1}{pk}\, \phi(d)\, z^{p/d}_{kd} \right) = 
\end{equation}
\begin{equation}\label{eq:species-8}
= \exp\left( \sum^\infty_{n=1}\, \sum_{d | (n, p)}\, \frac{d}{np}\, \phi(d)\, z^{p/d}_n \right) = \prod^\infty_{n=1}\, \exp\left( \frac{1}{np}\, \sum_{d | (n,p)} d\, \phi(d)\, z^{p/d}_n \right),
\end{equation}
where the last identity is obtained by setting $n = k d$, and $(n, p)$ denotes the greatest common divisor of $n$ and $p$.
An analogous expression holds for the cycle index $\mathcal{Z}_{S_q}(z_1, z_2, \dots)$ of the species $S_q$ (up to the substitution of $q$ in place of $p$).

Let us set 
\begin{equation}\label{eq:species-9}
P_n(z_n) = \exp\left( \frac{1}{np}\, \sum_{d|(n, p)} d\, \phi(d)\, z^{p/d}_n \right),\,\, Q_n(z_n) = \exp\left( \frac{1}{nq}\, \sum_{d|(n, q)} d\, \phi(d)\, z^{q/d}_n \right).
\end{equation}
Then
\begin{equation}\label{eq:species-10}
\mathcal{Z}_{S_p}(z_1, \dots, z_p) = \prod^\infty_{n=1} P_n(z_n),\,\, \mathcal{Z}_{S_q}(z_1, \dots, z_q) = \prod^\infty_{n=1} Q_n(z_n).
\end{equation}

By \eqref{eq:species-9} -- \eqref{eq:species-10}, the cycle indices $\mathcal{Z}_{S_p}$ and $\mathcal{Z}_{S_q}$ are \textit{separable}, c.f. \cite[Section A.3.3]{Petitot-Vidal}, which means that the cycle index $\mathcal{Z}_{H^*}$ can be expressed as
\begin{equation}\label{eq:species-11}
\mathcal{Z}_{H^*}(z_1, z_2, \dots) = \prod^\infty_{n=1} P_n(z_n) \odot Q_n(z_n).
\end{equation}
Given that two species $A$ and $B$ satisfy $A=E(B)$, by \cite[\S 1.4, Exercice 9]{Bergeron-et-al} we can obtain a formula for $\mathcal{Z}_B$ from $\mathcal{Z}_A$. In particular, the cycle index $\mathcal{Z}_H$ is 
\begin{equation}\label{eq:species-12}
\mathcal{Z}_H(z_1, z_2, \dots) = \sum^\infty_{n=1} \frac{\mu(n)}{n} \log \mathcal{Z}_{H^*}(z_n, z_{2n}, z_{3n}, \dots),
\end{equation}
where $\mu(n)$ is the M\"{o}bius function. According to \cite[\S 1.2, Th\'{e}or\`{e}me 8 (b)]{Bergeron-et-al}, the generating function $\widetilde{H}(z) = \sum_{n\geq 0} |\mathfrak{H}_{p,q}(n)| z^n$ satisfies
\begin{equation}\label{eq:gen-func-10}
\widetilde{H}(z) = \mathcal{Z}_{H}(z, z^2, z^3, \dots) = \sum^\infty_{n = 1} \frac{\mu(n)}{n}\, \log \mathcal{Z}_{H^*}(z^n, z^{2n}, z^{3n}, \dots) = 
\end{equation}
\begin{equation}\label{eq:gen-func-11}
= \sum^\infty_{n = 1} \frac{\mu(n)}{n}\, \sum^\infty_{k=1} \log(P_n\odot Q_n)(z_n)\vert_{z_n=z^{nk}}.  
\end{equation}

Thus we arrive at the following theorem:

\begin{Thm}\label{thm:p-q-hypermap-isom} Let $\widetilde{H}(z) = \sum^{\infty}_{n=0} | \mathfrak{H}_{p,q}(n)| z^n$ be the growth series for the number of isomorphism classes of $(p,q)$-hypermaps on $n$ darts.
\begin{itemize}
\item[(i)] Then the following formula holds:
\begin{equation*}
\widetilde{H}(z) = \sum^\infty_{n = 1} \frac{\mu(n)}{n}\, \sum^\infty_{k=1} \log(P_n\odot Q_n)(z_n)\vert_{z_n = z^{nk}},
\end{equation*}
with
\begin{equation*}
P_n(z_n) = \exp\left( \frac{1}{np}\, \sum_{d|(n, p)} d\, \phi(d)\, z^{p/d}_n \right),\,\, Q_n(z_n) = \exp\left( \frac{1}{nq}\, \sum_{d|(n, q)} d\, \phi(d)\, z^{q/d}_n \right).
\end{equation*}
\item[(ii)] The asymptotic expansion for its non-zero coefficients is
\begin{equation*}
[z^{\langle p,q \rangle k}] \widetilde{H}(z) \approx c_0 k^{c_1 k - \frac{1}{2}}\,\, e^{ c_2 k}, \mbox{ as } k \rightarrow \infty,
\end{equation*}
where $c_0, c_1, c_2 > 0$ are constants that depend only on $p$ and $q$.
\end{itemize}
\end{Thm}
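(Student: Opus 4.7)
For part (i), essentially all of the work is already done in the derivation preceding the theorem statement; the role of the proof is to collect these strands in order. The first step is to recall from \eqref{eq:species-3}--\eqref{eq:species-5} that $\mathcal{Z}_{S_p}$ is obtained from $\mathcal{Z}_E$ and $\mathcal{Z}_{C_p}$ via $S_p = E(C_p)$, and to verify the separability identity $\mathcal{Z}_{S_p}(z_1, z_2, \dots) = \prod_{n\ge 1} P_n(z_n)$ (and similarly for $S_q$), which is the rearrangement carried out in \eqref{eq:species-6}--\eqref{eq:species-8}. Since $H^*=S_p\times S_q$ (a cartesian product of species), the cycle index factorises under the Hadamard product variable by variable, yielding \eqref{eq:species-11}. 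Next I would apply Möbius inversion to $H^*=E(H)$ via \cite[\S 1.4, Exercice 9]{Bergeron-et-al} to get \eqref{eq:species-12}, and finally specialise $z_i=z^i$ (using $\widetilde{H}(z)=\mathcal{Z}_H(z,z^2,\dots)$) and expand each logarithm in the resulting product, producing the inner sum over $k$ in the stated formula.

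For part (ii), the strategy is to isolate the dominant contribution. A direct computation shows that $P_1(z)=\exp(z^p/p)=S_p(z)$ and $Q_1(z)=\exp(z^q/q)=S_q(z)$, so $(P_1\odot Q_1)(z)=H^*(z)$; hence the $n=1,k=1$ summand in the expansion of $\widetilde{H}(z)$ is exactly $\log H^*(z)$. Combining \eqref{eq:gen-func-9} with Theorem \ref{thm:p-q-hypermaps-growth-transcendental} yields
\begin{equation*}
[z^{\langle p,q\rangle k}]\log H^*(z) \;=\; \frac{1}{\langle p,q\rangle k}\,[z^{\langle p,q\rangle k}]H^\circ(z) \;\approx\; C'\,k^{(N-1)k-c_0-1}\,e^{c_1 k},
\end{equation*}
so the leading term already has the required form, after absorbing the $1/(\langle p,q\rangle k)$ factor into the constants $C$ and $c_0$ of the statement.

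It then remains to show that the other summands contribute to $[z^{\langle p,q\rangle k}]\widetilde{H}(z)$ only lower-order terms. Each $(P_m\odot Q_m)(u)$ is a power series in $u$ whose constant term is $1$, so $\log(P_m\odot Q_m)(z^{nm})$ is a power series in $z^{nm}$ starting at a positive power; hence the coefficient of $z^{\langle p,q\rangle k}$ in $\log(P_m\odot Q_m)(z^{nm})$ equals (up to a bounded factor) the coefficient of $u^{\langle p,q\rangle k/(nm)}$ in $\log(P_m\odot Q_m)(u)$, and in particular vanishes unless $nm\mid \langle p,q\rangle k$. For each fixed $(n,m)$ with $nm\ge 2$, the relevant coefficient of $\log(P_m\odot Q_m)$ grows at most like $C'' (k/nm)^{(N'-1)k/(nm)} e^{c_1' k/(nm)}$ for appropriate constants (by the same hypergeometric asymptotics applied to $P_m\odot Q_m$, which is a variant of $H^*$ in which $S_p,S_q$ are replaced by permutations whose cycle lengths are $p/d$ and $q/d'$). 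Since this grows strictly slower than $k^{(N-1)k}e^{c_1 k}$, and since the outer sums over $n$ and $k$ are convergent once $nm$ is bounded below by $2$, the total contribution of all such terms is absorbed into lower-order corrections.

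\textbf{Main obstacle.} The computational part of (i) is routine once the separability and Möbius inversion are in place. The main effort lies in part (ii): one must confirm the uniform bound on the coefficients of each $\log(P_m\odot Q_m)(u)$ so that the contribution of the tail $nm\ge 2$ can be bounded by a geometric-type series, and then check that these contributions are swallowed by the $n=m=1$ term's super-exponential growth. The precise identification of the constants $N,C,c_0,c_1$ would then follow from (\ref{eq:asympt-2}) in the proof of Theorem~\ref{thm:p-q-hypermaps-growth-transcendental}, up to the trivial shift $c_0\mapsto c_0+1$ coming from the extra factor $1/(\langle p,q\rangle k)$.
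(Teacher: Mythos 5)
Your part (i) follows the paper's route exactly: the paper's proof of (i) simply points back to the chain \eqref{eq:species-3}--\eqref{eq:gen-func-11}, which is precisely the separability of $\mathcal{Z}_{S_p}$, $\mathcal{Z}_{S_q}$, the Hadamard factorisation of $\mathcal{Z}_{H^*}$, the M\"obius inversion of $H^*=E(H)$, and the specialisation $z_i=z^i$ that you describe. For part (ii), however, you take a genuinely different path. The paper argues combinatorially: it asserts that asymptotically almost all $(p,q)$-hypermaps on $n$ darts are asymmetric (adapting \cite[Section 7.1]{Drmota-Nedela}), so that $|\mathfrak{H}_{p,q}(n)| \approx \frac{1}{n}|\mathfrak{H}^r_{p,q}(n)|$, and then imports the asymptotics of Theorem~\ref{thm:p-q-hypermaps-growth-transcendental}. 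You instead extract the same $\frac{1}{\langle p,q\rangle k}$ correction analytically from the formula in (i), by observing that the $n=k=1$ summand is $\log H^*(z)$, whose coefficients are $\frac{1}{m}[z^m]H^\circ(z)$ by \eqref{eq:gen-func-9}, and then bounding the contribution of all summands with $nm\geq 2$; this matches the paper's displayed constants exactly (the exponent of $k$ drops by $1$ and the prefactor loses a $\langle p,q\rangle$, as in the proof of Theorem~\ref{thm:p-q-hypermap-isom}). The two arguments are morally dual --- your tail estimate is in effect a proof of the asymmetry claim, since the $nm\geq 2$ terms of the cycle-index sum account exactly for the symmetric hypermaps --- but yours is more self-contained (no appeal to an external asymmetry result), at the cost of having to verify the uniform coefficient bounds on $\log(P_m\odot Q_m)(u)$ that you correctly flag as the main remaining work; note that for a fixed power $z^{\langle p,q\rangle k}$ only the finitely many pairs with $nm \mid \langle p,q\rangle k$ contribute, so the required uniformity is over a divisor-bounded set and your sketch does go through.
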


\begin{proof}
Most of the proof has already been provided above. The remaining part is the asymptotic behaviour of $[z^n] \widetilde{H}(z)$, as $n \rightarrow \infty$. 

Since most of the $(p,q)$-hypermaps on $n$ darts are asymmetric (one can adapt the argument from \cite[Section 7.1]{Drmota-Nedela} in order to show this), we have that $| \mathfrak{H}_{p,q}(n)| \approx \frac{1}{n}\, | \mathfrak{H}^{r}_{p,q}(n)|$, as $n \rightarrow \infty$. Then we apply Theorem~\ref{thm:p-q-hypermaps-growth-transcendental} to obtain the required asymptotic formula:
\begin{equation*}
[z^{\langle p,q \rangle k}] \widetilde{H}(z) \approx \frac{ (2\pi)^{\frac{N-1}{2}}}{\prod_i \Gamma(a_i)} \,\, k^{(N-1)k - \frac{1}{2}}\,\, e^{ (1 + \log c - N) k}, \mbox{ as } k \rightarrow \infty,
\end{equation*}
where $N = 1 + \frac{(p-1)(q-1) - 1}{(p,q)}$, and the constants $c$ and $a_i$ satisfy $c^{(p,q)} = \langle p,q \rangle^{(p-1)(q-1)-1}$, $a_i = \frac{i (p,q)}{pq}$, with $i = 1,\dots, \frac{pq}{(p,q)}-1$, $p \nmid i$, $q \nmid i$.
\end{proof}

Again, we can reformulate our result in group-theoretic terms via Lemma~\ref{lemma:hypermaps-and-subgroups-2}.

\begin{Thm}\label{thm:conjugacy-classes}
Let $\Delta^+ = \mathbb{Z}_p * \mathbb{Z}_q$ be a free product of cyclic groups with $p, q > 1$, $pq\geq 6$. Then the conjugacy growth series $C_{f}(z) = \sum^{\infty}_{n=0} c_{f}(n) z^n$ for the numbers $c_{f}(n)$ of conjugacy classes of free subgroups of index $n$ in $\Delta^+$ coincides with $\widetilde{H}(z)$.
\end{Thm}

Below we consider several examples of the generating series $\widetilde{H}(z) = \sum^{\infty}_{n=0} | \mathfrak{H}_{p,q}(n)| z^n$ from Theorem~\ref{thm:p-q-hypermap-isom} for various values of $p$ and $q$. In order to facilitate computations with power series, we use the SAGE code from Appendix~III.

\begin{Ex}\label{example:isom-triangulations}
For the number of isomorphism classes of triangulations on $n$ darts we have $\widetilde{H}(z) = \sum^{\infty}_{n=0} |\mathfrak{H}_{2,3}(n)| z^n = 3\, z^6 + 11\, z^{12} + 81\, z^{18} + 1228\, z^{24} + 28174\, z^{30} + 843186\, z^{36} + 30551755\, z^{42} + 1291861997\, z^{48} + 62352938720\, z^{54} + 3381736322813\, z^{60} + \dots$. The coefficient sequence of $\widetilde{H}(z)$ has index A129114 in the OEIS \cite{OEIS}. 
\end{Ex}

\begin{Ex}\label{example:isom-quadrangulations}
For the number of isomorphism classes of quadrangulations on $n$ darts we have $\widetilde{H}(z) = \sum^{\infty}_{n=0} |\mathfrak{H}_{2,4}(n)| z^n = 2\, z^4 + 7\, z^8 + 36\, z^{12} + 365\, z^{16} + 5250\, z^{20} + 103801\, z^{24} + 2492164\, z^{28} + 70304018\, z^{32} + 2265110191\, z^{36} +  82013270998\, z^{40} + \dots$. The coefficient sequence of $\widetilde{H}(z)$ has index A292206 in the OEIS \cite{OEIS}. 
\end{Ex}

\begin{Ex}\label{example:isom-bicoloured-triangulations}
For the number of isomorphism classes of bi-coloured triangulations on $n$ darts we have $\widetilde{H}(z) = \sum^{\infty}_{n=0} |\mathfrak{H}_{3,3}(n)| z^n = 2\, z^3 + 3\, z^6 + 16\, z^9 + 133\, z^{12} + 1440\, z^{15} + 22076\, z^{18} + 401200\,  z^{21} + 8523946\, z^{24} + 206375088\, z^{27} + 5611089408\, z^{30} + \dots $. The coefficient sequence of $\widetilde{H}(z)$ has index A292207 in the OEIS \cite{OEIS}. 
\end{Ex}

\begin{Ex}\label{example:isom-fullerene-hypermaps}
For the number of isomorphism classes of fullerene hypermaps on $n$ darts we have $\widetilde{H}(z) = \sum^{\infty}_{n=0} |\mathfrak{H}_{5,6}(n)| z^n =  25267952661607723932\, z^{30} + 3242815920921585$ $11841901090716554032993726622535664\, z^{60} + 9893872851057565593775407161767990303$ $1448011824448933765884797305927298614859862448800\, z^{90} + \dots$.
\end{Ex}

\begin{Ex}\label{example:conjugacy-classes}
The conjugacy growth series for free subgroups in $\mathbb{Z}_2*\mathbb{Z}_3$, $\mathbb{Z}_2*\mathbb{Z}_4$, $\mathbb{Z}_3*\mathbb{Z}_3$ and $\mathbb{Z}_5*\mathbb{Z}_6$ are given in Examples \ref{example:isom-triangulations}, \ref{example:isom-quadrangulations}, \ref{example:isom-bicoloured-triangulations} and \ref{example:isom-fullerene-hypermaps}, respectively. An independent computation with GAP \cite{GAP} by issuing \texttt{LowIndexSubgroupsFPGroup} command gives matching results. 
\end{Ex}

To the best of our knowledge, the following questions remain at the moment unsettled. 

\begin{Que}
Is the growth series $\widetilde{H}(z)$ in Theorem~\ref{thm:p-q-hypermap-isom} algebraic or transcendental? 
\end{Que}

\begin{Que}
Is $\widetilde{H}(z)$ a solution to any differential-algebraic equation or system of such equations? Is there a recurrence relation of any sort that the coefficients of $\widetilde{H}(z)$ satisfy?
\end{Que}

\section{Other free products and their free subgroups}

As we mentioned in the introduction, the group $\Delta^+ = \mathbb{Z}_p*\mathbb{Z}_q$ is also known as the $(p,q,\infty)$-triangle group, a Fuchsian group with rich geometry. With the above technique we can also count the number of free subgroups of finite index and their conjugacy classes in $\Delta^+ = \mathbb{Z}_p*\mathbb{Z}$ and $\Delta^+ = \mathbb{Z}*\mathbb{Z}$, which are the $(p, \infty, \infty)$- and $(\infty, \infty, \infty)$-triangle groups, respectively.
These two groups have been fruitfully used by Breda d'Azevedo -- Menykh -- Nedela \cite{Breda-Mednykh-Nedela} and later on by Mednykh -- Nedela \cite{MN, MN1, MN2} for counting hypermaps: $\Delta^+ = \mathbb{Z}_2*\mathbb{Z}$ was used for counting maps without semiedges, and $\Delta^+ = \mathbb{Z}*\mathbb{Z}$ was used for counting general hypermaps (both rooted hypermaps and their isomorphism classes). 

Since most of the study has been already done in \cite{Breda-Mednykh-Nedela, MN, MN1, MN2}, we briefly formulate the necessary statements below without a proof. Where necessary, we give more details.

\subsection{Free subgroups of $\mathbb{Z}_p*\mathbb{Z}$}

If $\Delta^+ = \mathbb{Z}_p*\mathbb{Z}$, then its free subgroups of index $n$ are in bijection with the elements of $\mathfrak{H}^r_p(n)$, which is the set of rooted connected hypermaps on $n$ darts with $p$-gonal hyperedges. The conjugacy classes of free index $n$ subgroups in $\Delta^+$ are in bijection with the elements of $\mathfrak{H}_p(n)$, which is the set of isomorphism classes of rooted connected hypermaps on $n$ darts with $p$-gonal hyperedges. We shall call such hypermaps rooted, resp. unrooted, $(p,\infty)$-hypermaps.

Let us first define the species $H^\circ$ by the following equations
\begin{equation}\label{eq:species-13}
H^\circ = Z \cdot H^\prime, \,\, H^* = E(H), \mbox{ and } H^* = S_p\times S,
\end{equation}
analogous to equations \eqref{eq:species-1} -- \eqref{eq:species-2}, where $S$ is the species of all permutations, $H^*$ is the species of labelled, not necessarily connected, $(p,\infty)$-hypermaps, $H$ is the species of labelled connected $(p,\infty)$-hypermaps, and $H^\circ$ of rooted ones. Thus, we have that $\mathfrak{H}^r_p(n) = H^\circ[n]$.

Using the fact that 
\begin{equation}\label{eq:gen-func-12}
S_p(z) = \sum^\infty_{k=0} \frac{z^{pk}}{p^k k!}, \,\, S(z) = \frac{1}{1-z},
\end{equation}
we obtain 
\begin{equation}\label{eq:gen-func-13}
H^*(z) = (S_p \odot S)(z) = \sum^\infty_{k=0} \frac{(pk)!}{p^k k!} z^{pk}.
\end{equation}

Then, analogous to Section~\ref{s:hypermaps}, $H^*(z) = f(p^{p-1} z^p)$, where $f(x)$ is a hypergeometric function:
\begin{equation}\label{eq:gen-func-14}
f(x) = {}_{p}F_0\left[ \begin{array}{c}
\frac{1}{p}, \dots, \frac{p-1}{p}, 1\\
\ldots
\end{array}; x \right].
\end{equation}

We also have that $H^\circ(z) = p x \frac{f^\prime(x)}{f(x)} = p w(x)$, with $x = p^{p-1} z^p$, and $w(x)$ is a solution to an equation from the Riccati hierarchy with $N = p$, and $a_i = \frac{i}{p}$, for $i = 1, \dots, p$, c.f. Section~\ref{s:recurrence}.

\begin{Ex}\label{example:recurrence-maps}

If $p=2$, then $H^*(z)$ enumerates labelled oriented pre-maps (without semi-edges and, in general, disconnected) on $n$ darts or, equivalently, with $n/2$ edges, $n=1,2,\dots$, so that the odd coefficients of $H^*(z)$ vanish. We also have that $H^\circ(z) = \sum^\infty_{n=0} h(n) z^n$ enumerates oriented rooted maps on $n$ darts. All the odd coefficients of $H^\circ(z)$ vanish as well. Finally, $H^\circ(z) = 2 w(x)$, where $x = 2z^2$ and $w(x )$ satisfies the following Riccati equation (c.f. also \cite{AB}):
\begin{equation*}
w(x) = x^2 w^\prime(x) + \frac{3}{2} x w(x) + x w^2(x) + \frac{1}{2} x. 
\end{equation*}
Thus the coefficients of the series $w(x) = \sum^\infty_{n=0} w_n z^n$ satisfy $w_0 = 0$, $w_1 = \frac{1}{2}$ and
\begin{equation*}
w_{n+1} = \left(n + \frac{3}{2}\right) w_n + \sum^{n}_{i=0} w_i w_{n-i},\,\, n\geq 1.
\end{equation*}

This implies
\begin{equation*}
h(2n+2) = (2n+3) h(2n) + \sum^{n}_{i=0} h(2i) h(2n-2i),\,\, n\geq 1,
\end{equation*}
with $h(0) = 0$, $h(2) = 2$, and $h(2n+1) = 0$ for all natural $n\geq 0$. 

\end{Ex}

By analogy to Theorems \ref{thm:p-q-hypermaps-growth-transcendental} and \ref{thm:subgroup-growth-transcendental} the following holds. 

\begin{Thm}\label{thm:p-hypermaps-growth-transcendental}
The growth series $H^\circ(z) = \sum^{\infty}_{n=0} | \mathfrak{H}^r_p(n)| z^n$ is transcendental and the following asymptotic formula holds for its non-zero coefficients:
\begin{equation*}
[z^{pk}] H^\circ(z) \approx \frac{ (2\pi)^{\frac{p-1}{2}} \, p}{\prod^p_{i=1} \Gamma\left( \frac{i}{p} \right)} \,\, k^{(p-1)k + \frac{1}{2}}\,\, e^{ (p-1) (\log p - 1) k}, \mbox{ as } k \rightarrow \infty.
\end{equation*}
\end{Thm}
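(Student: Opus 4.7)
The plan is to mirror the two-step strategy used for Theorem~\ref{thm:p-q-hypermaps-growth-transcendental}, adapted to the degenerate setting where one of the two cyclic factors is replaced by $\mathbb{Z}$ and, correspondingly, $S_q$ is replaced by the species $S$ of all permutations. First I would establish transcendence of $H^\circ(z)$ via the Harris--Sibuya theorem applied to the rapidly divergent hypergeometric series $f(x)$ in \eqref{eq:gen-func-14}. Then, for the asymptotic formula, I would extract a closed form for $[z^{pk}] H^*(z)$, apply Stirling's approximation, and transfer the estimate to $H^\circ(z)$ through the logarithmic-derivative identity $H^\circ(z) = z\frac{d}{dz}\log H^*(z)$.

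For the transcendence step, the factorisation $H^*(z) = f(p^{p-1}z^p)$ in \eqref{eq:gen-func-13}--\eqref{eq:gen-func-14} shows $f(x)$ is a ${}_pF_0$ hypergeometric series, hence holonomic but with zero radius of convergence. By \cite[Corollary~2]{Harris-Sibuya}, $1/f$ is non-holonomic, so $f'/f$ is transcendental over $\mathbb{C}(x)$. Substituting $x = p^{p-1}z^p$ and multiplying by $p\,x$ preserves transcendence, so $H^\circ(z)$ is transcendental as a series in $z$.

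For the explicit asymptotic, I would evaluate $\prod_{i=1}^p (i/p)_k$ by the Gauss multiplication formula for the Gamma function,
\[ \prod_{j=0}^{p-1} \Gamma\!\left(z + \tfrac{j}{p}\right) = (2\pi)^{(p-1)/2}\, p^{1/2 - pz}\, \Gamma(pz), \]
applied at $z = 1/p + k$ and at $z = 1/p$. This collapses the product of Pochhammer symbols to $p^{-pk}(pk)!$, reproducing $[z^{pk}] H^*(z) = (pk)!/(p^k k!)$ from \eqref{eq:gen-func-13}. Stirling's approximation then yields
\[ [z^{pk}] H^*(z) \sim \sqrt{p}\, p^{(p-1)k}\, k^{(p-1)k}\, e^{-(p-1)k}. \]

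To pass from $H^*$ to $H^\circ$, I would appeal to the coefficient asymptotics for rapidly divergent generating functions of \cite[Theorem~4.1]{Drmota-Nedela}, exactly as in the proof of Theorem~\ref{thm:p-q-hypermaps-growth-transcendental}: the factorial growth rate forces the convolution $H^* \cdot H^\circ = z (H^*)'$ to be dominated by its diagonal term, so $[z^{pk}] H^\circ(z) \sim pk \cdot [z^{pk}] H^*(z)$. Substituting the Stirling estimate and simplifying the constant using $\prod_{i=1}^p \Gamma(i/p) = (2\pi)^{(p-1)/2}\, p^{-1/2}$ (another application of Gauss's formula, now at $z = 1/p$) recovers the displayed prefactor $p^{3/2}$ and the exponent $(p-1)(\log p - 1)$ in the exponential. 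The only real subtlety is verifying that the rapid-divergence regime of \cite{Drmota-Nedela} genuinely applies and that subleading contributions to the coefficients of $\log H^*(z)$ are swallowed by the error term; this is the direct analogue of the corresponding check in Theorem~\ref{thm:p-q-hypermaps-growth-transcendental}, with $N$ now specialised to $p$ and all $a_i = i/p$.
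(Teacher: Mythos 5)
Your proposal is correct and follows essentially the same route the paper intends: the paper states this theorem without a separate proof, asserting it ``by analogy'' to Theorem~\ref{thm:p-q-hypermaps-growth-transcendental}, and your argument is precisely that analogy carried out with $N=p$, $a_i=i/p$ and $c=p^{p-1}$ in \eqref{eq:asympt-2}. Your Gauss-multiplication and Stirling computations check out (the prefactor reduces to $p^{3/2}$, the exponent of $k$ to $(p-1)k+1$, and the exponential to $e^{(p-1)(\log p-1)k}$), as does the Harris--Sibuya transcendence step.
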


\begin{Ex}\label{example:maps}
For the case of oriented maps on $n$ darts without semi-edges (i.e. oriented connected $(2,\infty)$-hypermaps), we have that $H^\circ(z) = \sum^{\infty}_{n=0} | \mathfrak{H}^r_2(n)| z^n = 
2\, z^2 + 10\, z^4 + 74\, z^6 + 706\, z^8 + 8162\, z^{10} + 110410\, z^{12} + 1708394\, z^{14} + 29752066\, z^{16} + 576037442\, z^{18} + 12277827850\, z^{20} + \dots$ by using the SAGE code from Appendix~IV. The coefficient sequence of $H^\circ(z)$ has index A000698 in the OEIS \cite{OEIS}. Since $H^\circ(z)$ satisfies a classical Riccati equation, by Corollary \ref{cor:non-holonomic} this sequence is non-holonomic. This is also the $S(2,-3,1)$ sequence from \cite{Martin}.
\end{Ex}

\begin{Thm}\label{thm:subgroups-growth-transcendental-2}
Let $\Delta^+ = \mathbb{Z}_p * \mathbb{Z}$. Then the subgroup growth series $S_{f}(z) = \sum^{\infty}_{n=0} s_{f}(n) z^n$, for the number $s_{f}(n)$ of index $n$ free subgroups of $\Delta^+$, is equal to $H^\circ(z)$. 
\end{Thm}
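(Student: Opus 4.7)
The plan is to imitate verbatim the proof of Theorem~\ref{thm:subgoup-growth-transcendental}, replacing Lemma~\ref{lemma:hypermaps-and-subgroups-1} by its analogue for the group $\Delta^+ = \mathbb{Z}_p * \mathbb{Z}$ and the species setup from equations \eqref{eq:species-13}--\eqref{eq:gen-func-13}. Concretely, I would first establish a bijection between the free subgroups of index $n$ in $\Delta^+ = \langle \varrho \mid \varrho^p = \varepsilon \rangle * \langle \delta \rangle$ and the set $\mathfrak{H}^r_p(n)$ of connected rooted $(p,\infty)$-hypermaps on $n$ darts. Once this bijection is in place, the theorem follows immediately by comparing coefficients with Theorem~\ref{thm:p-hypermaps-growth-transcendental}.

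For the bijection, given $H = \langle D; \alpha, \varphi \rangle \in \mathfrak{H}^r_p(n)$, the map $\varrho \mapsto \alpha$, $\delta \mapsto \varphi$ defines an epimorphism from $\Delta^+$ onto $\langle \alpha, \varphi \rangle$ (here $\delta$ has no relation, so there is no obstruction to sending it to an arbitrary $\varphi$), and the transitive action of $\langle \alpha,\varphi\rangle$ on $D$ induces a transitive action of $\Delta^+$ on $D$. Letting $\Gamma$ be the stabiliser of dart $1$ gives an index-$n$ subgroup; $\Gamma$ is torsion-free because every torsion element of $\Delta^+$ is conjugate to a power of $\varrho$ (the $\mathbb{Z}$-factor contributes no torsion), and no conjugate of $\varrho^k$ with $1\le k<p$ can lie in $\Gamma$ since $\alpha$ is a product of $p$-cycles. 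By the Kurosh subgroup theorem $\Gamma$ is free.

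Conversely, a free (equivalently torsion-free) index-$n$ subgroup $\Gamma<\Delta^+$ produces a hypermap $H_\Gamma = \langle D_\Gamma; \alpha_\Gamma, \varphi_\Gamma\rangle$ with $D_\Gamma$ the cosets $g\Gamma$, $\alpha_\Gamma(g\Gamma)=(\varrho g)\Gamma$, $\varphi_\Gamma(g\Gamma)=(\delta g)\Gamma$, and root $\varepsilon\Gamma$. The same argument as in Lemma~\ref{lemma:hypermaps-and-subgroups-1} shows every disjoint cycle of $\alpha_\Gamma$ has length exactly $p$, while no restriction is placed on the cycles of $\varphi_\Gamma$ since $\delta$ has infinite order. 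Hence $H_\Gamma \in \mathfrak{H}^r_p(n)$, and the two constructions are mutually inverse.

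Having established $|\mathfrak{H}^r_p(n)| = s_f(n)$, the equality of generating series $H^\circ(z) = S_f(z)$ follows at once. The only conceptual subtlety, and the point worth emphasising, is why the $\mathbb{Z}$-factor contributes no constraint on $\varphi$: precisely because $\mathbb{Z}$ is torsion-free, torsion-freeness of $\Gamma$ imposes no cyclic-length requirement on $\varphi_\Gamma$. Beyond this observation, the proof is a direct transcription of the argument for Lemma~\ref{lemma:hypermaps-and-subgroups-1}, so I do not anticipate any genuine obstacle.
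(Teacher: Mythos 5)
Your proposal is correct and follows exactly the route the paper intends: the paper states this theorem without proof, remarking only that it holds ``by analogy'' with Lemma~\ref{lemma:hypermaps-and-subgroups-1} and Theorem~\ref{thm:subgoup-growth-transcendental}, and your argument is precisely that analogy carried out, including the correct observation that the torsion-free condition constrains only $\alpha_\Gamma$ (forcing $p$-cycles) while leaving $\varphi_\Gamma$ unrestricted because the $\mathbb{Z}$-factor is torsion-free.
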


\begin{Ex}
For $\Delta^+ = \mathbb{Z}_2*\mathbb{Z}$, the free subgroup growth series $S_f(z)$ is given in Example~\ref{example:maps}. 
\end{Ex}

The number of isomorphism classes of hypermaps or, equivalently, the number of conjugacy classes of free subgroups in $\Delta^+$, are given in the two theorems below.

\begin{Thm}\label{thm:p-hypermap-isom}
The growth series $\widetilde{H}(z) = \sum^{\infty}_{n=0} | \mathfrak{H}_{p}(n)| z^n$ is given by the formula
\begin{equation*}
\widetilde{H}(z) = \sum^\infty_{n = 1} \frac{\mu(n)}{n}\, \sum^\infty_{k=1} \log(P_n\odot Q_n)(z_n)\vert_{z_n=z^{nk}},
\end{equation*}
with
\begin{equation*}
P_n(z_n) = \exp\left( \frac{1}{np}\, \sum_{d|(n, p)} d\, \phi(d)\, z^{p/d}_n \right),\,\, Q_n(z_n) = \sum^\infty_{d=1} z^d_n = \frac{1}{1-z_n}.
\end{equation*}
Its non-zero coefficients have the asymptotic expansion
\begin{equation*}
[z^{pk}] \widetilde{H}(z) \approx \frac{ (2\pi)^{\frac{p-1}{2}} }{\prod^p_{i=1} \Gamma\left( \frac{i}{p} \right)} \,\, k^{(p-1)k - \frac{1}{2}}\,\, e^{ (p-1) (\log p - 1) k}, \mbox{ as } k \rightarrow \infty.
\end{equation*}
\end{Thm}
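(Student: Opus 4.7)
The plan is to mirror the species-theoretic derivation that produced Theorem~\ref{thm:p-q-hypermap-isom}, substituting the species $S$ of all permutations for $S_q$ throughout. I would start from the analogue of \eqref{eq:species-3}, namely $H^* = S_p \times S$ together with $H^* = E(H)$, where $H^*$ is the species of labelled (not necessarily connected) $(p,\infty)$-hypermaps and $H$ its connected variant. The cycle index of $S_p$ is unchanged from \eqref{eq:species-6}--\eqref{eq:species-8}, producing the factors $P_n(z_n)$ exactly as stated. For $S$ itself, a short direct count, using that the centraliser of a permutation of cycle type $1^{c_1}2^{c_2}\cdots$ has order $\prod_k c_k!\, k^{c_k}$, yields
\begin{equation*}
\mathcal{Z}_S(z_1, z_2, \ldots) \;=\; \prod_{k \geq 1} \frac{1}{1-z_k},
\end{equation*}
which is already separable with factors $Q_n(z_n) = (1-z_n)^{-1}$.

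With both $\mathcal{Z}_{S_p}$ and $\mathcal{Z}_S$ separable, the cycle index of the Cartesian product $H^* = S_p \times S$ factorises as $\mathcal{Z}_{H^*} = \prod_{n\geq 1}(P_n \odot Q_n)(z_n)$, in parallel with \eqref{eq:species-11}. Inverting the exponential relation $H^* = E(H)$ via the M\"obius-type formula from \cite[\S 1.4, Exercice 9]{Bergeron-et-al}, and then specialising $z_k \mapsto z^k$ to pass from the cycle index of $H$ to the unlabelled generating function, yields the formula for $\widetilde{H}(z)$ in part (i) verbatim as in \eqref{eq:gen-func-10}--\eqref{eq:gen-func-11}.

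For part (ii) I would follow the same route as in the proof of Theorem~\ref{thm:p-q-hypermap-isom}, establishing that almost every connected labelled $(p,\infty)$-hypermap on $n$ darts is asymmetric, so that $|\mathfrak{H}_p(n)| \approx \tfrac{1}{n}|\mathfrak{H}^r_p(n)|$ as $n \to \infty$. Substituting the asymptotic from Theorem~\ref{thm:p-hypermaps-growth-transcendental} and dividing by $n = pk$ lowers the exponent of $k$ by one, giving $k^{(p-1)k}$, leaves the exponential rate $e^{(p-1)(\log p - 1)k}$ intact, and absorbs a factor of $p$ into the leading constant, which matches the stated formula. The main obstacle is precisely this asymmetry estimate: one has to adapt the argument of \cite[\S 7.1]{Drmota-Nedela} to the $(p,\infty)$ setting by decomposing hypermaps admitting a non-trivial automorphism according to the cycle type of a generator of the automorphism group, bounding the number of admissible pairs $(\alpha,\varphi)$ commuting with any prescribed non-identity permutation, and checking that the resulting total is of strictly smaller order than $|\mathfrak{H}^r_p(n)|/n$. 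Once this asymmetry lemma is in place, the remaining steps reduce to direct specialisations of the machinery already developed above.
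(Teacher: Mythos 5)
Your proposal is correct and follows essentially the same route the paper takes: the paper states Theorem~\ref{thm:p-hypermap-isom} without proof as the direct analogue of Theorem~\ref{thm:p-q-hypermap-isom}, obtained by replacing $S_q$ with the species $S$ of all permutations (whose separable cycle index $\prod_{k\ge 1}(1-z_k)^{-1}$ gives the factors $Q_n$), and your asymptotic step --- asymmetry giving $|\mathfrak{H}_p(n)| \approx \frac{1}{n}|\mathfrak{H}^r_p(n)|$ followed by division by $n=pk$ --- reproduces exactly the computation in the proof of Theorem~\ref{thm:p-q-hypermap-isom}. Your reliance on adapting the Drmota--Nedela asymmetry argument is at the same level of detail as the paper itself, so no gap relative to the source.
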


\begin{Ex}\label{example:maps-isomorphism}
For the case of isomorphism classes of oriented maps on $n$ darts (i.e. isomorphism classes of oriented connected $(2,\infty)$-hypermaps), we have that $\widetilde{H}(z) = \sum^{\infty}_{n=0} | \mathfrak{H}_2(n)| z^n = 2\, z^2 + 5\, z^4 + 20\, z^6 + 107\, z^8 + 870\, z^{10} + 9436\, z^{12} + 122840\, z^{14} + 1863359\, z^{16} + 32019826\, z^{18} + 613981447\, z^{20} + \dots$ by using the SAGE code from Appendix~IV. The coefficient sequence of $\widetilde{H}(z)$ has index A170946 in the OEIS \cite{OEIS}.
\end{Ex}

\begin{Thm}\label{thm:conjugacy-classes-2}
Let $\Delta^+ = \mathbb{Z}_p * \mathbb{Z}$ be a free product of cyclic groups with $p \geq 2$. Then the conjugacy growth series $C_{f}(z) = \sum^{\infty}_{n=0} c_{f}(n) z^n$, for the number $c_{f}(n)$ of conjugacy classes of index $n$ free subgroups in $\Delta^+$, coincides with $\widetilde{H}(z)$.
\end{Thm}

\begin{Ex}
For $\Delta^+ = \mathbb{Z}_2*\mathbb{Z}$, the conjugacy counting function $C_f(z)$ coincides with $\widetilde{H}(z)$ from Example~\ref{example:maps-isomorphism}. This can be verified by an independent computation using GAP's \texttt{LowIndexSubgroupsFPGroup} command \cite{GAP}.
\end{Ex}

\subsection{Free subgroups of $\mathbb{Z}*\mathbb{Z}$}

If $\Delta^+ = \mathbb{Z}*\mathbb{Z} \cong F_2$, the free group on two generators, then all of its index $n$ subgroups are free, and correspond bijectively to the elements of the set $\mathfrak{H}^r(n)$ of rooted hypermaps on $n$ darts. The conjugacy classes of index $n$ subgroups in $\Delta^+$ are in bijection with the elements of $\mathfrak{H}(n)$, which is the set of isomorphism classes of hypermaps on $n$ darts.  
In this case, the formulas for the number of index $n$ subgroups in $\Delta^+$ and their conjugacy classes are given by multiple authors including, respectively, Hall \cite{Hall}, for the former, and Liskovec \cite{L71}, Mednykh \cite{M-j-algebra}, for the latter. The advantage of a species theory approach is that the proof turns out to be much simpler and shorter. Initially, we have that
\begin{equation}\label{eq:hypermaps-1} 
H^*(z) = (S \odot S)(z) = \sum^\infty_{n=0} n! \cdot z^n,
\end{equation} 
and
\begin{equation}\label{eq:hypermaps-2}
H^\circ(z) = z \frac{d}{dz} \log H^*(z) = \sum^\infty_{n=0} s_n \cdot z^n,
\end{equation}
where $s_n$ is given by the recurrence relation (c.f. \cite{Hall})
\begin{equation}\label{eq:hypermaps-3}
s_n = n\cdot n! - \sum^{n-1}_{k=1} k!\cdot s_{n-k}, \mbox{ and } s_0 = 0,\,\, s_1 = 1.
\end{equation}
The sequence \eqref{eq:hypermaps-3} is known to be non-holonomic due to Klazar (c.f. discussion after \cite[Proposition 5.1]{Klazar}).

Also, 
\begin{equation}\label{eq:hypermaps-4}
\mathcal{Z}_{H^*}(z_1, z_2, \dots) = \prod^\infty_{n=1} \left( \sum^\infty_{i=0} i!\cdot n^i \cdot z^i_n \right)
\end{equation}
and
\begin{equation}\label{eq:hypermaps-5}
\widetilde{H}(z) = \sum^\infty_{k=1} \frac{\mu(k)}{k} \log \mathcal{Z}_{H^*}(z^k, z^{2k}, \dots) = \sum^\infty_{k=1} \frac{\mu(k)}{k} \sum^\infty_{n=1} \log\left( \sum^\infty_{i=0} i!\cdot n^i\cdot z^{ikn} \right).
\end{equation}

By using the fact that $\log \left( \sum^\infty_{i=0} i! z^i \right) = \sum^\infty_{i=1} \frac{s_i}{i} z^i$ (already used in formulas \eqref{eq:hypermaps-1}-\eqref{eq:hypermaps-2}), and substitution $z \rightarrow n\cdot z^{kn}$ we may simplify formula \eqref{eq:hypermaps-5} down to
\begin{equation}\label{eq:hypermaps-6}
\widetilde{H}(z) = \sum^\infty_{n=1} z^n \cdot \frac{1}{n} \sum_{i|n} s_i \sum_{m|\frac{n}{i}} \mu\left(  \frac{n}{i\, m}\right)\cdot m^{i+1} = 
\end{equation}
\begin{equation}\label{eq:hypermaps-7}
= \sum^\infty_{n=1} z^n \cdot \frac{1}{n} \sum_{\ell | n} s_{n/\ell}\, \varphi_{n/\ell+1}(\ell),
\end{equation}
where $\varphi_{m}(\ell) = \sum_{d|\ell} \mu\left( \frac{\ell}{d} \right)\, d^{m}$ is the Jordan totient function. The coefficient $[z^n]\widetilde{H}(z)$ in \eqref{eq:hypermaps-7} coincides with the one in  \cite[Proposition 2.1]{D}, \cite{L71} or \cite[Theorem 2]{M-j-algebra} and can be easily computed, for reasonable values of $n$, by using the SAGE code from Appendix~IV.

\section*{Appendix}

\subsection*{I. Counting free subgroups in $\mathbb{Z}_p*\mathbb{Z}_q$}

First we define the necessary parameters: $p$, $q$, and $n$, the index up to which we shall count the number of free subgroups in $\mathbb{Z}_p*\mathbb{Z}_q$.

\begin{verbatim}
# computing the number of free subgroups in Z_p*Z_q of index < n
# or: computing the number of rooted (p,q)-hypermaps on < n darts

# defining p, q (can be any positive integers such that p, q > 1, 
# p*q \geq 6)
p = 2; q = 3;

# defining n, which has to be a multiple of lcm(p,q)
n = 66;

# defining power series ring over \mathbb{Q}
R.<z> = PowerSeriesRing(QQ, default_prec = 2*n);
\end{verbatim}

Then we define the auxiliary function $f(x)$.

\begin{verbatim}
# defining f(z), such that H^\ast(z) = f(c*z^lcm(p,q))
f = 1; coeff = 1; 
for k in range(1,n):
    mult = 1;
    for i in range(1, lcm(p,q)):
        if (not(i%p==0) and not(i%q==0)):
            mult = mult*(k-1+i/lcm(p,q));
    coeff = coeff*mult;
    f = f + coeff*power(z,k)/factorial(k);
\end{verbatim}

We shall also need the constant $c$ for the substitution $x = c z^{\langle p,q \rangle}$.

\begin{verbatim}
# defining the constant c
c = power(lcm(p,q), (p*q - p - q)/gcd(p,q));
\end{verbatim}

Finally, $H^\circ(z)$ can be computed.

\begin{verbatim}
# computing H^\circ
Hcirc = lcm(p,q)*z*derivative(f)/f;
Hcirc.substitute(z = c*power(z,lcm(p,q))).truncate(n);
\end{verbatim}

\begin{verbatim}
>202903221120000*z^60 + 3366961243750*z^54 + 61999046400*z^48 +
1282031525*z^42 + 30220800*z^36 + 828250*z^30 + 27120*z^24 + 1105*z^18 +
60*z^12 + 5*z^6
\end{verbatim}

\subsection*{II. Finding a generalised Riccati equation for $w(x)$}

First we define $p$ and $q$, and then compute the number of terms in equation \eqref{eqn:riccati-3}. 

\begin{verbatim}
# defining p, q (can be any positive integers such that p, q > 1, 
# p*q \geq 6)
p = 2; q = 3;
\end{verbatim}

\begin{verbatim}
# computing N
N = 1 + (p*q - p - q)/gcd(p,q);
N;
>2
\end{verbatim}

Next we prepare all the ingredients for formula \eqref{eqn:riccati-3}, starting with the symmetric functions on the $a_i$'s, then computing the $w_i(x)$'s, and finally the equation itself. Below we use $Z$ instead of $x$ in the SAGE code throughout. 

\begin{verbatim}
# defining symmetric functions
e = SymmetricFunctions(QQ).elementary();
\end{verbatim}

\begin{verbatim}
# creating the list of a_i's
a_val = [];
for i in range(1,lcm(p,q)):
    if (not(i%p==0) and not(i%q==0)):
        a_val.append(i/lcm(p,q));
\end{verbatim}

\begin{verbatim}
# defining the symmetric function \sigma_k(a_1,\dots,a_N)
def sym_coeff(k):
    coeff = e([k]).expand(int(N), alphabet='a');
    return coeff(*a_val);
\end{verbatim}

\begin{verbatim}
# defining x and w as a function of x
var('Z'); w = function('w')(Z); w;
>Z
>w(Z)
\end{verbatim}

\begin{verbatim}
# defining w_i(x) for i = 1,\dots,N
w_lst = [1,w];
for i in range(1,N):
     w_lst.append(Z*w_lst[i].derivative(Z) + w*w_lst[i]);
\end{verbatim}

\begin{verbatim}
# defining the list of symmetric coefficients of the equation
coeff_lst = [sym_coeff(i) for i in range(N+1)];
\end{verbatim}

Now we compose the equation itself, and view the output.

\begin{verbatim}
# composing the equation
eqn = expand(w - Z*sum([a*b \\
						for a,b in zip(reversed(coeff_lst), w_lst)]));
# and viewing it
view(eqn);
\end{verbatim}

\begin{verbatim}
> - Z*w(Z)^2 - Z^2*D[0](w)(Z) - Z*w(Z) - 5/36*Z + w(Z)
\end{verbatim}

\subsection*{III. Counting free conjugacy classes in $\mathbb{Z}_p*\mathbb{Z}_q$}

We start by defining the parameters: $p$, $q$, and $n$, and the polynomial ring that we shall use. Although cycle indices are multivariate polynomials, we need only one variable $z$ to present the final result of computation as $\widetilde{H}(z)$.

\begin{verbatim}
# computing the number of conjugacy classes of 
# free subgroups in Z_p*Z_q of index < n
# or: computing the number of isomorphism classes 
# of (p,q)-hypermaps on < n darts

# defining p, q (can be any positive integers such that p, q > 1, 
# p*q \geq 6)
p = 2; q = 3;

# defining n, which has to be a multiple of lcm(p,q)
n = 66;

# defining power series ring over \mathbb{Q}
R.<z> = PowerSeriesRing(QQ, default_prec=2*n);
\end{verbatim}

Next we define the auxiliary functions $P_m(z_m)$ and $Q_m(z_m)$. Note that we use $z$ as a variable instead of $z_m$ (computing series in a single variable is usually faster in SAGE).

\begin{verbatim}
def P(m):
    sum = 0;
    for d in divisors(gcd(m,p)):
        sum = sum + d*euler_phi(d)*power(z, p//d);
    sum = sum/(m*p);
    return sum.exp(2*n);
\end{verbatim}

\begin{verbatim}
def Q(m):
    sum = 0;
    for d in divisors(gcd(m,q)):
        sum = sum + d*euler_phi(d)*power(z, q//d);
    sum = sum/(m*q);
    return sum.exp(2*n);
\end{verbatim}

Next we define the Hadamard product of $P_m(z_m)$ and $Q_m(z_m)$

\begin{verbatim}
def h_prod_PQ(m):
    prod = 0;
    P_coeff = P(m).dict();
    Q_coeff = Q(m).dict();
    for k in Set(P_coeff.keys()).intersection(Set(Q_coeff.keys())):
            prod = prod \\
            + power(z,k)*P_coeff[k]*Q_coeff[k]*factorial(k)*power(m,k);
    return prod;
\end{verbatim}

and its logarithm $\log(P_m\odot Q_m)(z_m)$ upon the substitution $z_m = z^{k m}$

\begin{verbatim}
def log_h_prod_PQ(m,k):
    return log(h_prod_PQ(m)).substitute(z=power(z,m*k));
\end{verbatim}

Finally, we define the general term of the double sum in Theorem~\ref{thm:p-q-hypermap-isom}

\begin{verbatim}
@parallel
def term(m,k):
    return moebius(k)/k*log_h_prod_PQ(m,k);
\end{verbatim}

in a way that allows parallel computing in order to speed up the computation.

The function $\widetilde{H}(z)$ is the double sum of the terms \texttt{term(m,k)} above. We define it, and compute its output.

\begin{verbatim}
# defining H_tilde(n):
def H_tilde(n):
    return sum([t[1] for t in list(term([(m,k) for m in range(1,n) \\
    for k in range(1,n)]))]).truncate(n);
# and computing it:
H_tilde(n);
\end{verbatim} 

\begin{verbatim}
>3381736322813*z^60 + 62352938720*z^54 + 1291861997*z^48 + 30551755*z^42
+ 843186*z^36 + 28174*z^30 + 1228*z^24 + 81*z^18 + 11*z^12 + 3*z^6
\end{verbatim}

\subsection*{IV. Free products $\mathbb{Z}_p*\mathbb{Z}$ and $\mathbb{Z}*\mathbb{Z}$}

In order to compute the subgroup growth series for the number of free subgroups having index $< n$ in $\mathbb{Z}_p*\mathbb{Z}$ we use essentially the same code as in Appendix~I.

\begin{verbatim}
# computing the number of free subgroups in Z_p*Z of index < n 
# or: computing the number of rooted (p,\infty)-hypermaps on < n darts
\end{verbatim}

\begin{verbatim}
# defining p (can be any integer \geq 2)
p = 2;
# defining n, which has to be a multiple of p
n = 42;
\end{verbatim}

\begin{verbatim}
# defining the power series ring
R.<z> = PowerSeriesRing(QQ, default_prec=2*n);
\end{verbatim}

The auxiliary function $f(x)$ is defined, where $x = p^{p-1}\, z^p$. Analogous to Appendix~I, we have that $H^*(z) = f(p^{p-1}\, z^p)$ and $H^\circ(z) = p^{p-1} z^p \frac{f'(z)}{f(z)}$.

\begin{verbatim}
# defining the auxiliary function f(x)
f = sum( [ factorial(p*k)/power(p, p*k)*power(z, k)/factorial(k) \\
											for k in range(n) ] );
\end{verbatim}

\begin{verbatim}
# computing H^\circ
Hcirc = p*z*derivative(f)/f;
Hcirc.substitute(z=power(p,p-1)*power(z,p)).truncate(n);
\end{verbatim}

\begin{verbatim}
>12444051435099603489508546*z^40 + 302625067295157128042954*z^38 +
7734158085942678174730*z^36 + 208256802758892355202*z^34 +
5925085744543837186*z^32 + 178676789473121834*z^30 +
5731249477826890*z^28 + 196316804255522*z^26 + 7213364729026*z^24 +
285764591114*z^22 + 12277827850*z^20 + 576037442*z^18 + 29752066*z^16 +
1708394*z^14 + 110410*z^12 + 8162*z^10 + 706*z^8 + 74*z^6 + 10*z^4 +
2*z^2
\end{verbatim}

Similarly we use our SAGE code from Appendix~III to compute the number of conjugacy classes of free subgroups in $\mathbb{Z}_p*\mathbb{Z}$. The only change that we perform is setting

\begin{verbatim}
def P(m):
    sum = 0;
    for d in divisors(gcd(m,p)):
        sum = sum + d*euler_phi(d)*power(z, p//d);
    sum = sum/(m*p);
    return sum.exp(2*n);
\end{verbatim}

\begin{verbatim}
def Q(m):
    return sum([power(z,k) for k in range(2*n)]);
\end{verbatim}

For instance, computing the number conjugacy classes of free subgroups in $\mathbb{Z}_p*\mathbb{Z}$ of index $< n$ with $p = 2$ and $n = 42$ produces the following output which is consistent with \cite[Table 1]{Breda-Mednykh-Nedela}.

\begin{verbatim}
H_tilde(n);
>311101285883236139915989*z^40 + 7963817561236130021156*z^38 +
214837724735760642773*z^36 + 6125200100394894738*z^34 +
185158932576089787*z^32 + 5955893472990664*z^30 + 204687564072918*z^28 +
7550660328494*z^26 + 300559406027*z^24 + 12989756316*z^22 +
613981447*z^20 + 32019826*z^18 + 1863359*z^16 + 122840*z^14 + 9436*z^12
+ 870*z^10 + 107*z^8 + 20*z^6 + 5*z^4 + 2*z^2
\end{verbatim}

Computing the number of subgroups in $\mathbb{Z}*\mathbb{Z} \cong F_2$ can be achieved by using Hall's recursive formula \cite{Hall}.  

In order to compute the number of conjugacy classes of subgroups in $\mathbb{Z}*\mathbb{Z} \cong F_2$, we again use the SAGE code from Appendix~III while setting $P_m(z_m)$ and $Q_m(z_m)$ to be 

\begin{verbatim}
def P(m):
    return sum([power(z,k) for k in range(2*n)]);
\end{verbatim}

and

\begin{verbatim}
def Q(m):
    return sum([power(z,k) for k in range(2*n)]);
\end{verbatim}

This produces the following output (as before we keep $n=42$):

\begin{verbatim}
>32614432894950329376183336374438251415525346629525*z^41 +
794947340495954068749307315565065240346321967624*z^40 +
19859819172105592803670953279393110577234174667*z^39 +
508851255700533446262838487197588054824465660*z^38 +
13380404248488655454173405967972741129540081*z^37 +
361334724689721826695385753211211533721996*z^36 +
10028311083012435161491928424499759688087*z^35 +
286257408185259477967964317085477976381*z^34 +
8411024399891941340574941137506814445*z^33 +
254611211538026870669260639344594486*z^32 +
7947648478276985324309955268981883*z^31 +
256066759683317825307629683049530*z^30 +
8524510378559020280174747825833*z^29 +
293538903330959739871704646496*z^28 + 10467749727936465019060859695*z^27
+ 387062034840260690448914883*z^26 + 14860597828358206165421269*z^25 +
593273186302387876241788*z^24 + 24667261441592452064563*z^23 +
1069983257460254131272*z^22 + 48509766592893402121*z^21 +
2303332664693034476*z^20 + 114794574818830735*z^19 +
6019770408287089*z^18 + 333041104402877*z^17 + 19496955286194*z^16 +
1211781910755*z^15 + 80257406982*z^14 + 5687955737*z^13 + 433460014*z^12
+ 35704007*z^11 + 3202839*z^10 + 314493*z^9 + 34470*z^8 + 4163*z^7 +
624*z^6 + 97*z^5 + 26*z^4 + 7*z^3 + 3*z^2 + z
\end{verbatim}

The coefficient sequence of the above series has index A057005 in the OEIS \cite{OEIS}.

\vspace*{0.5in}

\begin{table}[!hb]
\centering
\begin{tabular}{l@{\hskip 2in}l}
\begin{tabular}[l]{@{}l@{}} \it Laura Ciobanu\\ \it School of Mathematical\\ \it and Computer Sciences\\ \it Heriot-Watt University\\ \it 6100 Main Street\\ \it Edinburgh EH14 4AS, UK\\ \it l.ciobanu@hw.ac.uk\end{tabular} &
\begin{tabular}[l]{@{}l@{}}\it Alexander Kolpakov\\ \it Institut de Math\'{e}matiques\\ \it Universit\'{e} de Neuch\^{a}tel\\ \it Rue Emile-Argand 11\\ \it CH-2000 Neuch\^{a}tel\\ \it Suisse/Switzerland\\ \it kolpakov.alexander@gmail.com\end{tabular} 
\end{tabular}
\end{table}

\end{document}